\renewcommand\l{\lambda}
\renewcommand\L{\Lambda}
\newcommand\s{\sigma}
\newcommand\p{\varphi}
\newcommand\Th{\Theta}
\renewcommand\d{\delta}
\newcommand\De{\Delta}
\renewcommand\O{\Omega}
\newcommand\e{\varepsilon}
\newcommand\g{\gamma}
\newcommand\G{\Gamma}
\newcommand\z{\zeta}
\newcommand\R{{\mathbb R}}
\newcommand\C{{\mathbb C}}
\newcommand\N{{\mathbb N}}
\newcommand\D{{\mathbb D}}
\newcommand\T{{\mathbb T}}
\newcommand\Z{{\mathbb Z}}
\newcommand{\cF}{{\mathcal{F}}}
\newcommand{\cH}{{\mathcal{H}}}
\newcommand{\cS}{{\mathcal{S}}}
\newcommand{\cD}{{\mathcal{D}}}
\newcommand\ovl{\overline}
\newcommand\lp{\left(}
\newcommand\rp{\right)}
\renewcommand\Re{{\rm Re}}
\renewcommand\Im{{\rm Im}}
\renewcommand\arg{{\rm arg}}
\newcommand\rcoh{{\rm conv}_r(E)}
\newtheorem{theorem}{Theorem}
\newtheorem{lemma}{Lemma}
\newtheorem{corollary}{Corollary}
\newtheorem{proposition}{Proposition}
\newtheorem{definition}{Definition}
\begin{document}

\title[Blaschke-type conditions]
{Blaschke-type conditions in unbounded domains, generalized convexity and applications in perturbation theory}

\date{\today}

\author[S. Favorov]{Sergey Favorov}
\email{sfavorov@gmail.com}
\address{Mathematical School, Karazin Kharkov National University, 4 Swobody sq., 61077 Kharkov, Ukraine}

\author[L. Golinskii]{Leonid Golinskii}
\email{golinskii@ilt.kharkov.ua}
\address{Mathematics Division, Institute for Low Temperature Physics and Engineering, 47 Lenin ave., 61103 Kharkov, Ukraine}

\keywords{Subharmonic functions, Riesz measure, Riesz decomposition theorem, Blaschke condition, discrete spectrum,
perturbation theory, Schatten--von Neumann operators}
\subjclass{Primary 31A05; Secondary 30D50, 47A55, 47A10, 47B10}

\thanks{}
\dedicatory{}

\begin{abstract}
We introduce a new geometric characteristic of compact sets in the plane called $r$-convexity, which fits nicely into the concept
of generalized convexity and extends essentially the conventional convexity. For a class of subharmonic functions on unbounded
domains with $r$-convex compact complement, with the growth governed by the distance to the boundary, we obtain the Blaschke-type
condition for their Riesz measures. The result is applied to the study of the convergence of the discrete spectrum for the
Schatten--von Neumann perturbations of bounded linear operators in the Hilbert space.
\end{abstract}

\maketitle

\section{Introduction}

In 1915 Blaschke \cite{Bla} proved his celebrated result concerning zero sets of bounded analytic functions in the unit disk,
which became a gem of function theory. A vast literature with various refinements and far reaching extensions of the Blaschke
condition has appeared since then, see \cite{Dj, Gol, hk, Sh2, yul} and references therein.

We focus on a series of recent papers \cite{bgk, fg09, fg12, gk12}, where the authors study the zero sets of analytic functions
in the unit disk, which grow at the direction of a prescribed subset of the unit circle. The result in \cite{fg09} for analytic
functions looks as follows.

{\bf Theorem A}. Let $E\subset\partial\D$ be a closed set on the unit circle, $f$ be an analytic function in the unit disk $\D$
with the zero set $Z_f=\{z_n\}$ (each zero $z_n$ enters with its multiplicity) so that $|f(0)|=1$, and
$$ \log|f(z)|\le\frac{K_f}{{\rm dist}^q(z,E)}\,, \qquad z\in\D, \quad q>0, $$
${\rm dist}(E_1,E_2)$ is the distance between closed sets $E_1$ and $E_2$. Then for each $\e>0$
$$ \sum_n(1-|z_n|){\rm dist}^p(z_n,E)\le C(q,E,\e) K_f, \qquad p=\max(q+\kappa(E)-1+\e,0), $$
$\kappa(E)$ is the upper Minkowski dimension of $E$.

\smallskip

This result applies in perturbation theory of linear operators, although the situation there is somewhat different. The point is
that the basic objects -- the resolvent and the perturbation determinant -- are analytic functions on the resolvent set of the
corresponding operator (including infinity), which is an {\it unbounded} open set of the plane with the compact complement $E$,
the spectrum of the operator. To handle this problem, the attempts were made to go over to the unit disk, using the conformal
mapping \cite{dhk08,haka11} or the uniformization theorem \cite{gk12}, apply Theorem A and then get back by means of certain distortion
results. Such attempts were by and large successful only in the cases when $E$ is a single segment \cite{dhk08,haka11} or a finite
union of disjoint segments \cite{gk12}, and it is absolutely unclear whether it is possible to make such argument work for, say,
an arbitrary compact set on the line.

The reasoning in \cite{fg09} reveals a potential theoretic character of the problem, so the natural setting is subharmonic
functions $v$ and their Riesz measures (generalized Laplacians) $\mu=(1/2\pi)\De v$
rather than analytic functions and their zero sets. In the case $v=\log|f|$ with an analytic function $f$, the Riesz measure is
a discrete and integer-valued measure supported on $Z_f$, and $\mu\{z\}$ equals the multiplicity of the zero at $z$.

In this paper we develop a straightforward approach to the study of subharmonic functions on unbounded domains with the growth
governed by the distance to the boundary. Let $E$ be a compact set in the complex plane $\C$, which does not split the plane
(its complement $\O=\ovl\C\backslash E$ in the extended plane $\ovl\C$ is a domain, that is, a connected open in the sense of
$\ovl\C$ set). Consider a class of subharmonic on $\O$ functions subject to the following growth and normalization conditions
\begin{equation}\label{31}
v(z)\le K_v\,\psi(d(z)), \quad v(\infty)=0, \qquad d(z):={\rm dist}(z,E),
\end{equation}
$\psi$ is a positive and monotone decreasing function on $\R_+=[0,\infty)$, $\psi\to +\infty$ as $t\to 0+$
(we single out the constant $K_v$ on purpose, in view of applications in perturbation theory in Section~5). In the study
of the Riesz measures of such functions one is faced with at least two obstacles. First, the set $E$ may be small enough (polar or
just finite), so to apply the standard technique from the potential theory we step inside $\O$ and work in the ``outer neighborhood''
$$ \O_t:=\{z\in\C: \ d(z)>t\}, \quad t>0, \qquad \O=\O_0. $$
Its boundary $\partial\O_t=\{z:d(z)=t\}$ is non-polar, since it splits the plane (cf. \cite[Theorem 3.6.3]{Ran}), so the Green's
function $G_t$ for $\O_t$ exists and is unique, whenever $\O_t$ is a domain. Unfortunately, it is not hard to manufacture a set
$E$ so that $\O$ is a domain, but $\O_t$ is not for $t>0$.

To cope with this problem we introduce a new geometric characteristic -- $r$-convexity -- which fits nicely in the concept of
generalized convexity, see \cite{dgkl}. It can be defined in an arbitrary metric space, no linear structure is needed for that.
Precisely, it is well known that a closed set in $\C$ is convex if and only if it is the intersection of all closed half-planes
containing this set. For an arbitrary closed set $E$ this intersection agrees with the convex hull of $E$. As usual, we denote by $B(x,r)$,
$B^c(x,r)$, and $\partial B(x,r)$ an open unit disk of radius $r$ centered at $x$, its complement, and its
boundary, respectively
$$ B(x,r)=\{z: |z-x|<r\}, \quad B^c(x,r)=\{z:|z-x|\ge r\}, \quad \partial B(x,r)=\{z:|z-x|=r\}. $$
By replacing half-planes with exteriors of open disks $B^c$, we come to the following extension of the conventional convexity.
We start out with an obvious inclusion
\begin{equation}\label{rcon1}
E\subset {\rm conv}_r(E):=\bigcap\{B^c(z,r):\ \ E\subset B^c(z,r)\}, \quad r>0.
\end{equation}

\begin{definition}\label{d1}
We say that a closed set $E$ is $r$-convex, if $E=\rcoh$. The set $\rcoh$ is called the $r$-convex hull of $E$.
\end{definition}
In other words, $E$ is $r$-convex if
\begin{equation}\label{rcon2}
\C\backslash E=\bigcup\{B(z,r):\ \ B(z,r)\subset \C\backslash E\},
\end{equation}
that is, the complement to $E$ can be covered by open disks of a {\it fixed} radius $r>0$ which belong to this complement.
Similarly to the usual convexity, the intersection of any family of $r$-convex sets is $r$-convex. On the other hand,
in contrast to the usual convexity, a finite union of disjoint $r$-convex sets is $r'$-convex for some $r'\le r$. It is also clear
that $E_1\subset E_2$ implies ${\rm conv}_r(E_1)\subset {\rm conv}_r(E_2)$.

It follows from \eqref{rcon2}, that each $r$-convex set is also $r'$-convex for any
$r'<r$. So, the number $r_0(E):=\sup\{r: E=\rcoh\}$, called the radius of convexity of $E$, arises naturally. For instance,
each closed convex set $E$ is $r$-convex with $r_0(E)=\infty$, and it is easy to see that the same holds for each closed subset of
a line. Indeed, any open interval on a line (complementary interval of a closed set) can be covered with a disk of arbitrarily
large radius.
We show that $r_0(E)=R$ for each compact subset of a circle $\partial B(x,R)$, which contains more than two points (see
Proposition \ref{p2}). The sets with ``interior angles'', like $\{z\in\ovl\D:\ \pi/4\le\arg z\le 7\pi/4\}$, are not $r$-convex
for all $r>0$.

It turns out (see Theorem \ref{t1}) that if an $r$-convex compact set $E$ does not split the plane, then there exists $t_0=t_0(E)>0$
such that $\O_t$ is a domain for all $0\le t\le t_0$. So for such $t$
the Green's function $G_t$ for $\O_t$ exists and unique. A key potential theoretic result (Lemma \ref{l1}) provides the lower
bound for the Green's function with the pole at infinity
\begin{equation*}
G_{t}(z,\infty)\ge C\,\frac{d(z)}{|z|+1}\,, \quad z\in\O_{5t}, \quad 0<t\le t_0.
\end{equation*}
When $E$ is a finite set, the result can be improved $G_{t}(z,\infty)\ge C>0$, $z\in\O_{kt}$ with some $k=k(E)>1$. For
various estimates of the Green's functions and harmonic measures see, for instance \cite{Nev, GaMar, carl}.

\smallskip

Here is the main result of the paper.
\begin{theorem}\label{t2}
Let $E$ be an $r$-convex compact set with connected complement $\O=\ovl\C\backslash E$, and a subharmonic function $v$ satisfy
$\eqref{31}$. Let $\p$ be a positive, monotone
increasing and absolutely continuous function on $\R_+$, such that $\p_1(t):=t^{-1}\p(t)$ is monotone
increasing at the neighborhood of the origin, and
\begin{equation}\label{311}
\int_0^{1} \p_1'(t)\,\psi\left(\frac{t}5\right)dt+\int_{1}^\infty \p'(t)\,\psi\left(\frac{t}3\right)dt<\infty.
\end{equation}
Then the following Blaschke-type condition for the Riesz measure holds
\begin{equation}\label{312}
\int_{\O} \p(d(\z))\,\mu(d\z)\le C(E,\psi,\p) K_v.
\end{equation}
\end{theorem}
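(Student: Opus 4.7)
The strategy combines the Riesz decomposition of $v$ on the outer neighborhoods $\Omega_t$, which Theorem~\ref{t1} makes into domains for small $t$, with two different lower bounds on the Green function $G_t(\cdot,\infty)$, and assembles the resulting one-parameter family of estimates into \eqref{312} via Fubini.

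Fix $t\in(0,t_0]$. Since $v$ is subharmonic on $\Omega_t$ and bounded above there by $K_v\psi(t)$, it admits a least harmonic majorant $h_t$, and the Riesz representation theorem yields
\[
v(z)=-\int_{\Omega_t}G_t(z,\zeta)\,d\mu(\zeta)+h_t(z),\qquad z\in\Omega_t.
\]
Evaluating at $z=\infty$ and using $v(\infty)=0$ together with the maximum-principle bound $h_t(\infty)\le\sup_{\partial\Omega_t}v\le K_v\psi(t)$ produces the master inequality
\[
\int_{\Omega_t}G_t(\zeta,\infty)\,d\mu(\zeta)\le K_v\psi(t),\qquad 0<t\le t_0.
\]
Two specializations, tailored to the two terms of \eqref{311}, follow. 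For small $t$, Lemma~\ref{l1} gives $G_t(\zeta,\infty)\ge c\,d(\zeta)/(|\zeta|+1)$ on $\Omega_{5t}$ and, after the substitution $s=5t$, the near-field estimate
\[
\int_{\Omega_s}\frac{d(\zeta)}{|\zeta|+1}\,d\mu(\zeta)\le CK_v\psi(s/5),\qquad 0<s\le 5t_0.
\]
For large $t$ I compare $\Omega_t$ with the exterior of a disk containing $E$: if $z_0\in E$ and $R_0=\max_{z\in E}|z-z_0|$, then $\{|z-z_0|>R_0+t\}\subset\Omega_t$, and the domain-monotonicity of Green functions together with the explicit formula for the disk exterior gives $G_t(\zeta,\infty)\ge\log(|\zeta-z_0|/(R_0+t))\ge\log(3/2)$ for every $\zeta\in\Omega_{3t}$ with $t\ge R_0$. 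Substituting $s=3t$ produces the far-field estimate
\[
\mu(\Omega_s)\le CK_v\psi(s/3),\qquad s\ge 3R_0.
\]

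To reach \eqref{312}, split $\int_\Omega\varphi(d(\zeta))\,d\mu=I_{\mathrm{near}}+I_{\mathrm{far}}$ at $d(\zeta)=1$. On $\{d\le1\}$ the variable $|\zeta|$ stays bounded, so $d(\zeta)\le C(E)\,d(\zeta)/(|\zeta|+1)$; writing $\varphi(d)=d\,\varphi_1(d)$, expanding $\varphi_1(d)=\varphi_1(0+)+\int_0^d\varphi_1'(u)\,du$, and applying Fubini reduces $I_{\mathrm{near}}$ to a ``boundary'' contribution $C(E)\varphi_1(0+)\int_{d\le1}d(\zeta)/(|\zeta|+1)\,d\mu$ plus the double integral $C(E)\int_0^1\varphi_1'(u)\int_{\Omega_u\cap\{d\le1\}}d(\zeta)/(|\zeta|+1)\,d\mu\,du$; both are controlled by the near-field estimate (the first via a single application at a fixed $s\le\min(1,5t_0)$, the second term-by-term in $u$), which produces the first integral of \eqref{311}. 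For $I_{\mathrm{far}}$ the representation $\varphi(d)=\varphi(1)+\int_1^d\varphi'(u)\,du$ and Fubini give $\varphi(1)\mu(\Omega_1)+\int_1^\infty\varphi'(u)\mu(\Omega_u)\,du$, bounded via the far-field estimate by $CK_v\bigl(\varphi(1)\psi(1/3)+\int_1^\infty\varphi'(u)\psi(u/3)\,du\bigr)$.

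The main delicacies are transitional and have to be absorbed into the constant $C(E,\psi,\varphi)$. First, one must verify that the scale $3t$ (rather than $5t$) in the far-field comparison still produces a uniform positive constant in the Green-function lower bound. Second, the intermediate range $s\in(5t_0,3R_0)$, in which neither the near- nor the far-field estimate is directly applicable, has to be covered by the trivial monotone comparison $\mu(\Omega_s)\le\mu(\Omega_{\min(1,5t_0)})$. Third, the boundary $\varphi_1(0+)$-term requires the finiteness of $\int_{d\le1}d(\zeta)/(|\zeta|+1)\,d\mu$ at a fixed positive scale, which is exactly what the near-field estimate delivers at $s=\min(1,5t_0)$.
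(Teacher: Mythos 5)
Your strategy is the same as the paper's: apply the Riesz decomposition on $\O_t$ to get the ``master inequality'' $\int_{\O_t}G_t(\z,\infty)\,d\mu\le K_v\psi(t)$, feed in Lemma~\ref{l1} for the part of $\mu$ near $E$ and an explicit exterior-disk Green's function for the part far from $E$, and assemble via a layer-cake/Fubini computation. Two points, however, are not handled correctly as written.

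First, your far-field estimate invokes $G_t(\cdot,\infty)$ and the master inequality for $t\ge R_0$, but Theorem~\ref{t1} only guarantees that $\O_t$ is a domain (hence that $G_t$ exists and the Riesz decomposition applies on $\O_t$) for $t\le t_0\le r/4$, which may be much smaller than $R_0$. One can salvage this by observing that $\{d\le t\}=\bigcup_{e\in E}\ovl{B(e,t)}$ is star-shaped about $z_0$ once $t\ge R_0$, so $\O_t$ is in fact a domain there; but you did not say this, and the cleaner route — the one the paper takes — is to bypass $G_t$ entirely and apply the Riesz decomposition directly to $v$ on the exterior disk $\{|\z-z_0|>R_0+t\}$, whose Green's function $\log\lp|\z-z_0|/(R_0+t)\rp$ is explicit and whose domain status is never in doubt.

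Second, and more seriously, the treatment of the $\p_1(0+)$-term is wrong. The near-field estimate you derived reads $\int_{\O_s}\frac{d(\z)}{|\z|+1}\,d\mu\le CK_v\psi(s/5)$, i.e.\ it controls the measure on $\{d>s\}$, away from $E$; it says nothing about $\int_{\{0<d\le 1\}}\frac{d(\z)}{|\z|+1}\,d\mu$, which is what the boundary contribution $\p_1(0+)\int_{d\le 1}\frac{d(\z)}{|\z|+1}\,d\mu$ actually requires. Letting $s\to 0+$ in the near-field estimate makes $\psi(s/5)\to\infty$, so no fixed-$s$ application furnishes the bound you claim, and the Riesz measure of $v$ can indeed be non-integrable against $d(\z)$ near $E$ (e.g.\ $E=\{0\}$, $v(z)=|z|^{-q}$, $q\ge 1$). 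The argument closes only if $\p_1(0+)=0$, in which case the term vanishes. This is in fact what happens in every application in the paper (e.g.\ $\p_1(t)=t^{q+\e}$), and the paper's own layer-cake identity $\int_{\O}\p_1(d(\z))d(\z)\,\mu_1(d\z)=\int_0^{7S+1}\p_1'(t)H_1(t,\mu_1)\,dt$ also silently drops the term $\p_1(0+)H_1(0,\mu_1)$, so you share the oversight with the source — but the explicit justification you give for absorbing it is incorrect and should be replaced by the observation that $\p_1(0+)=0$ is forced (or must be imposed) for the statement to hold.
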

{\bf Remark 1}. Let $E$ be an $r$-convex compact set, $\widetilde\O$ its outer domain, that is, the unbounded component of $\O$.
Then the set ${\rm Pc}(E)=\ovl\C\backslash\widetilde\O$, aka the polynomial convex hull of $E$, is $r$-convex, and
$d(z)={\rm dist}(z,{\rm Pc}(E))$ for $z\in\tilde\O$. Given a subharmonic function $v$ \eqref{31}, its restriction
$\tilde v$ to $\widetilde\O$ satisfies the conditions of Theorem \ref{t2}, so the Blaschke-type condition \eqref{312}
holds with $\O$ replaced with $\widetilde\O$.
\smallskip

A typical example in Theorem \ref{t2} is $\psi(x)=x^{-q}$, $q>0$, where we can take
\begin{equation*}
\p(x)=x^{q+1/2}\,\left(\min\{x,1/x\}\right)^{\e+1/2}=
\left\{
  \begin{array}{ll}
    x^{q+1+\e}, & \hbox{$x\le1;$} \\
    x^{q-\e}, & \hbox{$x>1$.}
  \end{array}
\right.
\end{equation*}

\medskip

A special case of Theorem \ref{t2} with $v=\log|f|$, $f$ an analytic function, occurs in perturbation
theory in the study of discrete spectra for the Schatten--von Neumann perturbations of certain bounded linear operators. Given
a bounded linear operator $A_0$ on the Hilbert space $\cH$, and a compact operator $B$, the fundamental theorem of Weyl states
that the essential spectra of $A_0$ and $A=A_0+B$ agree, so the discrete eigenvalues of $A$ (the isolated eigenvalues of finite
algebraic multiplicity) can accumulate only at the joint essential spectrum. We want to gather some information on the rate of
accumulation under the stronger assumption that $B$ belongs to some Schatten--von Neumann operator ideal $\cS_q$, $1\le q<\infty$,
that is, if $\|B\|^q_{\cS_q}:=\sum_n s_n^q(B)<\infty$, $s_n(B)$ are the singular values of $B$. Under the rate of accumulation
we mean the inequalities of the form
\begin{equation}\label{disspec}
\sum_{\l\in\s_d(A)} d^p(\l)\le C\,\|B\|_{\cS_q}^q, \qquad d(\l):={\rm dist}(\l,\s(A_0))
\end{equation}
for some $p=p(q)$, $\s(T)$ ($\s_d(T)$) is the spectrum (discrete spectrum) of an operator $T$.

Kato \cite{kat} proved \eqref{disspec} for self-adjoint $A_0$ and $B\in\cS_q$ with $p=q\ge1$, $C=1$. Recently Hansmann \cite{han12}
obtained the same result for a self-adjoint $A_0$ and an arbitrary $B\in\cS_q$ with $p=q>1$ and the explicit (in a sense) constant
$C=C_q$.

For more general classes of operators \eqref{disspec} is shown to be true for both $A_0$ and $B$ normal with $p=q\ge2$, $C=1$
\cite{bol}, for all three $A_0$, $B$, $A$ normal with $p=q\ge1$, $C=1$ \cite{bhda}, and for $A_0$ normal, an arbitrary
$B\in\cS_q$ with $p=q\ge1$, $C=1$, under additional assumption that $\s(A_0)$ is a {\it convex} set \cite{han11}.

We apply Theorem \ref{t2} for the study of the rate of accumulation for an arbitrary $B\in\cS_q$ and operators $A_0$ (in general,
non-normal) with the $r$-convex spectrum and the growth of the resolvent governed by the distance to the spectrum (see precise
conditions (i)-(iii) in Section 5). The corresponding bound looks as follows
\begin{equation*}
\sum_{\l\in\s_d(A)} \Phi\lp d(\l)\rp\le C\, \|B\|_{\cS_q}^q,
\end{equation*}
$\Phi$ is a continuous function on $\R_+$, $\Phi(0)=0$. The result is illustrated with several examples.

\section{$r$-convexity}

Given an $r$-convex set $E$, it is in general hard enough to compute its radius of convexity. In some simple instances we can
work out this problem.

\begin{proposition}\label{p2}
Let $E=\{a,b,c\}$ be a $3$-point set in a general position, $R=R(abc)$ be the circumradius of the triangle $\De=\De(abc)$. Then
$r_0(E)=R(abc)$. Let $E$ be a compact subset of a circle $\partial B(y,\rho)$, $|E|\ge3$, then $r_0(E)=\rho$.
\end{proposition}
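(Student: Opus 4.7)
Since any three non-collinear points lie on their unique circumcircle $\partial B(O,R)$, the first assertion of the proposition is the special case $|E|=3$ of the second, so I will only address the second. Fix a compact $E\subset\partial B(y,\rho)$ with $|E|\geq 3$ and aim to prove $r_0(E)=\rho$.

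For the lower bound $r_0(E)\geq\rho$ I show that $E$ is $\rho$-convex: for each $w\notin E$ I exhibit an open disk of radius $\rho$ containing $w$ and disjoint from $E$. If $|w-y|<\rho$, the disk $B(y,\rho)$ itself works, since $E\subset\partial B(y,\rho)$. If $|w-y|>\rho$, I take $z$ on the ray from $y$ through $w$, pushed far enough out that $B(z,\rho)$ lies in the open exterior of $\overline{B(y,\rho)}$, and adjust the position so that $w\in B(z,\rho)$. If $w\in\partial B(y,\rho)\setminus E$, then $\delta:=\dist(w,E)>0$ as $E$ is closed; I take $z=y+(2\rho-\varepsilon)(w-y)/\rho$, i.e., the disk externally tangent to $\partial B(y,\rho)$ at $w$ with its center shifted a small $\varepsilon>0$ toward $w$. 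A direct distance estimate yields $|z-x|^2\geq(\rho-\varepsilon)^2+(2\rho-\varepsilon)\delta^2/\rho$ for every $x\in E$, so $|z-x|\geq\rho$ holds for $\varepsilon$ small enough, while $w\in B(z,\rho)$ throughout.

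For the upper bound $r_0(E)\leq\rho$ I fix $r>\rho$ and construct $w\notin E$ with $w\in{\rm conv}_r(E)$. Pick three points $a,b,c\in E$; they are non-collinear (a line meets $\partial B(y,\rho)$ in at most two points) and inscribed in the circle, so their circumradius equals $R=\rho$. Monotonicity of the $r$-convex hull in the underlying set gives ${\rm conv}_r(\{a,b,c\})\subset{\rm conv}_r(E)$, reducing the task to finding $w\in{\rm conv}_r(\{a,b,c\})\setminus E$. The key sub-claim is that ${\rm conv}_r(\{a,b,c\})$ has nonempty interior in the plane: granted this, the open interior cannot be contained in the one-dimensional set $E\subset\partial B(y,\rho)$, producing the desired $w$.

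To prove the sub-claim I use the characterization $p\in{\rm conv}_r(\{a,b,c\})\Leftrightarrow B(p,r)\subset B(a,r)\cup B(b,r)\cup B(c,r)$, equivalently $\dist(p,S_r)\geq r$ with $S_r:=\{z:|z-x|\geq r$ for all $x\in\{a,b,c\}\}$, and locate $p$ with $\dist(p,S_r)>r$ (which, by the $1$-Lipschitz property of $\dist(\cdot,S_r)$, puts a whole open neighborhood of $p$ in the hull). When the triangle is sufficiently close to equilateral, the circumcenter $y=O$ works: for any $z=O+tv$ with unit $v$ and $0\leq t<r$ one has $\min_x|z-x|^2=t^2-2tRc_v+R^2$, where $c_v=\max_x v\cdot(x-O)/R$, and the resulting quadratic in $t$ forces $\min_x|z-x|<r$ throughout $[0,r)$ when $r>R$. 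When the triangle has a large angle at a vertex $c$, the witness is taken on the segment $[O,c]$ at small distance from $c$; the constraints $|z-c|\geq r$ and $|z-w|<r$ confine $z$ to the thin lens $B(w,r)\setminus B(c,r)$, and a direct planar computation using $|ca|,|cb|\leq 2R<2r$ shows this lens is contained in $B(a,r)\cup B(b,r)$. The main obstacle is verifying the lens-covering estimate rigorously and matching the two regimes so as to cover all triangle shapes uniformly.
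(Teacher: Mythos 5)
Your lower bound $r_0(E)\geq\rho$ is correct: the three cases (inside, outside, and on the circle off $E$) are all handled, and the distance estimate in the third case is the right computation. The reduction of the second assertion to a statement about three points via monotonicity of the $r$-convex hull is also fine, and matches the paper's logic (though the paper runs the implication the other way, proving the $3$-point case first and deducing the circle case from it).

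The gap is the sub-claim you flag yourself: for $r>R(abc)$, the set ${\rm conv}_r(\{a,b,c\})$ has nonempty interior, equivalently there is a point $p$ with $B(p,r)\subset B(a,r)\cup B(b,r)\cup B(c,r)$. This is the entire content of the proposition; everything else is bookkeeping, so leaving it as a sketch in two unmatched regimes is a genuine hole, not a finishing detail. Your analytic formula $\min_x|z-x|^2=t^2-2tRc_v+R^2$ does show that the circumcenter $O$ works precisely when $\min_v c_v\geq 1/2$, i.e.\ when every arc between consecutive vertices is at most $120^\circ$; but outside that range the circumcenter is not in the hull (take $v$ pointing along the largest arc and $t$ close to $r$), and for the obtuse case you only state, without proof, that a thin lens near the obtuse vertex is covered by $B(a,r)\cup B(b,r)$. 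The two regimes also do not match up: "close to equilateral" should be "largest inscribed arc $\le120^\circ$," while the natural complementary case runs from that up through right and obtuse triangles, so there is a regime (large arc in $(120^\circ,180^\circ)$, i.e.\ acute but lopsided) that is not clearly covered by either of your two witnesses.

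The paper proves this sub-claim by a clean planar-geometry argument that avoids estimates altogether. The complement of ${\rm conv}_r(\{a,b,c\})$ inside $\De$ is swept out by disks of radius $r$ through two of the vertices avoiding the third, and each such disk meets $\De$ in a circular segment $[ab]_r$, $[ac]_r$, or $[bc]_r$. At $r=R$ the three bounding circles are the reflections of the circumcircle in the sides (or the circumcircle itself at the obtuse side): in the acute case they meet in the single point which is the orthocenter, and in the non-acute case they leave an open circular triangle at the widest vertex uncovered. Since the segments $[ab]_r$ shrink strictly as $r$ grows past $R$, the union $[ab]_r\cup[ac]_r\cup[bc]_r$ strictly fails to cover $\De$ for every $r>R$, which is exactly the nonempty-interior statement you need. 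If you want to salvage your approach you should either import this reflected-circumdisk fact, or make the lens-covering estimate precise and show the two regimes exhaust all triangles.
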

\begin{proof} Let us recall some known facts from elementary planar geometry.
A triangle $\De(abc)$ is always viewed as an open planar set.

\noindent
1. Given a triangle $\De(abc)$, an open disk $B(x,r)$ with $r>R$ such that
two vertices (say, $a$ and $b$) lie on its boundary, and $c\notin B(x,r)$, is uniquely determined.
We call it the $r$-disk and denote $(ab)_r$. If $r=R$ and $\De$ is acute, the $R$-disks $(ab)_R$, $(ac)_R$, and $(bc)_R$ are by definition
the reflections of the circumdisk $B_\De$ through the sides of $\De$. If $\De$ is non-acute, and $c$ is the vertex at the largest angle,
then $(ac)_R$ and $(bc)_R$ are defined as above, and $(ab)_R=B_\De$.

\noindent
2. If $\De$ is acute, then the circles $\partial(ab)_R$, $\partial(ac)_R$, and $\partial(bc)_R$ meet at one point in $\De$, precisely,
the orthocenter of $\De$, see, e.g., \cite[Problem 5.9]{Pra}. If $\De$ is non-acute, and $c$ is the vertex at the largest angle,
the circles $\partial(ab)_R$, $\partial(ac)_R$, and $\partial(bc)_R$ meet at $c$, and there is a circular triangle with one vertex
at $c$, which lies in $\De\backslash \lp\ovl{(ac)_R}\cup\ovl{(bc)_R}\rp$.

\noindent
3. For $r\ge R$ let $[ab]_r$ be the segment of the disk $(ab)_r$ with vertices $a$ and $b$, which intersects $\De$. Then for $R\le r_1<r_2$ we have
$[ab]_{r_2}\subset [ab]_{r_1}$, and the inclusion is proper.

It is clear that $r_0(E)\ge R$, so we wish to show that the complement to $E$ cannot be covered with open disks
of radius $r>R$ which avoid points $a$, $b$, and $c$. Since $\De$ is convex, we can restrict our attention
to the points of $\De$. Assume on the contrary that each point $x\in\De$ belongs to such disk. Then $x$ belongs to one of the three segments
from 3, so $\De\subset([ab]_r\cup[ac]_r\cup[bc]_r)$. But by 2 and 3 the latter union can not cover all $\De$.
Contradiction completes the proof of the first statement.

As far as the second statement goes, the set $E$ is clearly $r$-convex for $r\le \rho$. For $r>\rho$, as it has just been proved,
any 3-point set $E_1=\{a,b,c\}\subset E$ is not $r$-convex, and ${\rm conv}_r(E_1)$ contains points from $\De(abc)\subset B(y,\rho)$. Since
${\rm conv}_r(E_1)\subset {\rm conv}_r(E)$ for $E_1\subset E$, $E$ cannot be $r$-convex either, as claimed.
\end{proof}

{\bf Remark}. Given a triangle $\De(abc)$ with the circumradius $R$, let $E\subset\partial\De$ be a compact set, which contains all vertices
$a$, $b$, and $c$. It follows from the above proof and monotonicity of the $r$-convex hull, that for $r>R$ the intersection of
${\rm conv}_r(E)$ and $\De$ is nonempty.

\medskip

To extend the above result, let us say that a compact set $E$ has finite global curvature if
\begin{equation}\label{ecur}
r_g(E):=\inf\{R(abc)\}>0,
\end{equation}
where infimum is taken over all possible triangles with vertices in $E$. Clearly, \eqref{ecur} holds for finite sets.
When $E$ is a Jordan rectifiable curve, the value $r_g^{-1}(E)$ is known as the global curvature of $E$, see \cite{schmos}.

\begin{proposition}\label{mcur}
Each compact set $E$ with finite global curvature is $r$-convex, and
$$ r_0(E)=r_g(E). $$
\end{proposition}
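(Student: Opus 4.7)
\textbf{Proof plan for Proposition \ref{mcur}.} The claim splits into two inequalities, $r_0(E)\ge r_g(E)$ and $r_0(E)\le r_g(E)$.

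For the upper bound, fix $r>r_g(E)$ and, using the definition of the infimum in \eqref{ecur}, pick a triangle $\{a,b,c\}\subset E$ with circumradius $R=R(abc)<r$. The proof of Proposition~\ref{p2} shows that for $r>R$ the segments $[ab]_r,[ac]_r,[bc]_r$ fail to cover $\De(abc)$, and in fact yields the stronger conclusion that the open complement $U_r:=\De(abc)\setminus\lp[ab]_r\cup[ac]_r\cup[bc]_r\rp$ is nonempty and lies in ${\rm conv}_r(\{a,b,c\})$ (e.g.\ a small neighborhood of the orthocenter in the acute case, or of the largest-angle vertex otherwise). Since $r_g(E)>0$, the set $E$ itself has empty interior -- an interior point would produce triangles in $E$ of arbitrarily small circumradius -- hence $U_r\not\subset E$. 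Any $x\in U_r\setminus E$ lies in ${\rm conv}_r(\{a,b,c\})\subset{\rm conv}_r(E)$ by monotonicity, witnessing $E\ne{\rm conv}_r(E)$.

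For the lower bound, fix $r<r_g(E)$ and $x\notin E$, set $\rho:=d(x,E)$, and seek an open disk of radius $r$ containing $x$ and disjoint from $E$. If $\rho\ge r$ the disk $B(x,r)$ works, so assume $\rho<r$, fix a nearest point $y\in E$, and consider the one-point-tangent family $B(z(t),t)$ with $z(t)=y+t(x-y)/\rho$, in which every member has $y$ on its boundary and $x$ in its interior for $t\ge\rho$. Let $T$ be the first value of $t$ at which some second point $y'\in E\setminus\{y\}$ meets $\ovl{B(z(t),t)}$; if $T\ge r$ then $B(z(r),r)$ is the required disk. Otherwise pass to the two-point-tangent family in which $y,y'$ remain on the boundary as the center $\zeta$ slides along the perpendicular bisector of $[y,y']$ in the direction of growing common radius $R(\zeta)=|\zeta-y|=|\zeta-y'|$. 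A direct computation shows that $R(\zeta)^2-|\zeta-x|^2$ is affine in the bisector parameter with positive derivative in this direction, starting from the value $\rho(2T-\rho)>0$ at $\zeta=z(T)$; hence $x$ remains strictly inside every disk of the family.

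The main obstacle is to rule out that this two-point-tangent family hits a third point of $E$ before $R(\zeta)$ reaches $r$. If at some first radius $R(\zeta^*)\le r$ a third point $y''\in E$ meets $\partial B(\zeta^*,R(\zeta^*))$ while the open disk is still $E$-free, then the three concyclic points $y,y',y''$ form a triangle in $E$ of circumradius $R(\zeta^*)\le r<r_g(E)$, contradicting the definition \eqref{ecur} of $r_g(E)$. Therefore the radius grows freely up to $r$, and the resulting disk $B(\zeta,r)$ contains $x$ and is disjoint from $E$. Combining the two bounds yields $r_0(E)=r_g(E)$.
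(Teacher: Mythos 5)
Your upper bound $r_0(E)\le r_g(E)$ is essentially the paper's argument, stated a bit more explicitly (you make precise that the uncovered set $U_r$ is open, which is what makes the "$E$ has empty interior" step bite). Your lower bound, however, is a genuinely different construction: instead of taking the maximal disk $B(x_z,\rho_z)$ around a point and analyzing the contact set on its boundary (as the paper does), you grow a disk from the tangent configuration at the nearest point $y$ and then slide along the perpendicular bisector of $[y,y']$, using the affine identity for $R(\zeta)^2-|\zeta-x|^2$ to keep $x$ inside. This is an attractive, constructive alternative, and the bisector computation is correct; note only that the directional derivative $2\rho|s_0|/T$ can vanish when $y,y'$ happen to be antipodal on $\partial B(z(T),T)$, so "positive derivative" should read "nonnegative" -- the conclusion $R^2-|\zeta-x|^2\ge\rho(2T-\rho)>0$ still stands.

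There is, however, a genuine gap at the step "Let $T$ be the first value of $t$ at which some second point $y'\in E\setminus\{y\}$ meets $\ovl{B(z(t),t)}$." The infimum $T$ of the set of such $t$ certainly exists, but it need not come with a well-defined second contact point: it can happen that $E\cap\ovl{B(z(T),T)}=\{y\}$ while for $t_n\downarrow T$ there are points $w_n\in E\cap B(z(t_n),t_n)$ with $w_n\to y$, i.e.\ the candidate second contact degenerates to $y$ itself (here $y$ is a limit point of $E$ and the $w_n$ crowd into the thin lune between the two tangent circles). In that situation your two-point family never gets off the ground. This is exactly the delicate case the paper spends effort on: in its setup the maximal disk touches $E$ at two antipodal points $a,b$ and it must consider a sequence $c_n\to a$ in $G\cap E$, invoking Melnikov's formula \eqref{meln} to show $R(abc_n)\to\rho_z$ and hence $\rho_z\ge r_g(E)$. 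You need the analogous computation: if $w_n\to y$ in the lune, then (by the same formula, or by the osculating-circle heuristic) $R(y,w_n,w_m)\to T$ as $n,m\to\infty$, so $r_g(E)\le T$, contradicting the standing assumption $T<r<r_g(E)$. With that lemma added, the degenerate case is excluded and your argument closes; without it, the step asserting the existence of a genuine second point $y'$ is unjustified. A minor related point: already at $t=\rho$ the circle $\partial B(x,\rho)$ may meet $E$ in two or more points, in which case you should start directly with the two-point family; this is easy to accommodate but should be said.
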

\begin{proof}
Assume that for some $r>r_g(E)$ the set $E$ is $r$-convex. Take a
triangle $\Delta(a_0b_0c_0)$ so that $r>R(a_0b_0c_0)$. As it was shown in
the proof of Proposition \ref{p2}, the $r$-convex hull of this
triangle (and the more so, the $r$-convex hull of $E$ itself)
contains points of $\Delta(a_0b_0c_0)$. But it is easily seen from \eqref{ecur}
that $E$ has an empty interior. The contradiction shows that
$r_0(E)\le r_g(E)$.

This leaves only the converse inequality to be accounted for. We
show that each point $z\in\C\backslash E$ can be covered with a
disk $B\subset\C\backslash E$ of the radius at least $r_g(E)$.

Define
\begin{equation}\label{maxrad}
\rho_z:=\sup\{r:z\in B(x,r)\subset\C\backslash E\}.
\end{equation}
The compactness argument shows that there is a disk $B(x_z,\rho_z)$
$$ z\in B(x_z,\rho_z)\subset\C\backslash E. $$
If $\partial B(x_z,\rho_z)\cap E$ contains at least 3 different points, then $\rho_z\ge r_g(E)$, as needed.
Assume that $\partial B(x_z,\rho_z)\cap E=\{\z_1\}$, or $\partial B(x_z,\rho_z)\cap E=\{\z_1,\z_2\}$, and the points $\z_1,\z_2$
do not belong to a diameter of the circle. Then we can shift the disk in an appropriate direction
(perpendicular to the interval $[\z_1,\z_2]$ towards the center of the circle), and inflate it a bit to obtain
a bigger disk with the same property, which contradicts maximality of $\rho_z$ \eqref{maxrad}.

Hence we can focus upon the case $x_z=\rho_z$, $\partial B(\rho_z,\rho_z)\cap E=\{a,b\}$, $a=0$, $b=2\rho_z$
(after an affine transformation of the plane). Let $G=\{z: 0\le\Re z\le 2\rho_z\}$.

Assume first that there is a sequence $c_n=x_n+iy_n\in G\cap E$ with
$c_n\to a$ or $c_n\to b$ (with not loss of generality let the first relation hold). We want to show that now
$R(abc_n)\to\rho_z$. To this end, we apply an explicit formula for the
circumradius $R(z_1z_2z_3)$, suggested in \cite{mel}
\begin{equation}\label{meln}
 R^{-2}(z_1z_2z_3)=\sum_{\pi} \frac1{(z_{\pi(1)}-z_{\pi(2)})\ovl{(z_{\pi(1)}-z_{\pi(3)})}}=
\frac{4\Im^2\,(z_{1}-z_{2})\ovl{(z_{2}-z_{3})}}{|(z_{1}-z_{2})(z_{1}-z_{3})(z_{2}-z_{3})|^2}\,,
\end{equation}
where the sum is taken over all permutations of $\{1,2,3\}$. Since $c_n\in G\backslash B(\rho_z,\rho_z)$, $c_n\to 0$,
it is easy to see that $x_n/y_n\to 0$ as $n\to\infty$. It follows from \eqref{meln} that
\begin{equation*}
\begin{split}
R^{-2}(abc_n) &=\frac{16\rho_z^2\,y^2_n}{4\rho_z^2|2\rho_z-x_n-iy_n|^2(x_n^2+y_n^2)}=\frac{4y^2_n}{((2\rho_z-x_n)^2+y_n^2)(x_n^2+y_n^2)} \\
&=\frac4{((2\rho_z-x_n)^2+y_n^2)(x_n^2/y_n^2+1)}\to \frac1{\rho_z^2}, \end{split}\end{equation*}
as claimed. Hence now $\rho_z\ge r_g(E)$.

Otherwise, the disk can be shifted and inflated, as above,
which contradicts maximality of $\rho_z$. The proof is complete.
\end{proof}

\begin{proposition}\label{curve}
Each $C^2$-smooth Jordan curve $($arc$)$ has finite global curvature.
\end{proposition}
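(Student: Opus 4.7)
The plan is to argue by contradiction using the fact that on a $C^2$ curve three nearby points approximately lie on the osculating circle, so their circumradius is close to the local radius of curvature.

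Suppose $r_g(E)=0$. Then there exist triangles $\De(a_nb_nc_n)$ with vertices in $E$ and $R_n:=R(a_nb_nc_n)\to 0$. Since any side of a triangle is at most twice its circumradius, $\max\{|a_n-b_n|,|a_n-c_n|,|b_n-c_n|\}\le 2R_n\to 0$. By compactness of $E$, after passing to a subsequence, $a_n,b_n,c_n\to z_0\in E$.

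Near $z_0$ I would parameterize the curve by arclength, $\gamma:I\to\C$, with $|\gamma'|\equiv 1$ and $\gamma(s_0)=z_0$; the Jordan hypothesis makes $\gamma^{-1}$ continuous, so the preimages $s_n^a,s_n^b,s_n^c$ all tend to $s_0$. Differentiating $|\gamma'|^2\equiv 1$ gives $\Re(\gamma''\overline{\gamma'})\equiv 0$, hence $|\Im(\gamma'(s_0)\overline{\gamma''(s_0)})|=\kappa(s_0)$. The identity
\begin{equation*}
\det\bigl[\gamma(s_j)-\gamma(s_i),\gamma(s_k)-\gamma(s_i)\bigr] = \int_{s_i}^{s_j}\int_{s_i}^{s_k}\det[\gamma'(u),\gamma'(v)]\,du\,dv,
\end{equation*}
combined with $\det[\gamma'(u),\gamma'(v)]=(v-u)\kappa(s_0)+o(|v-u|)$ uniformly in $u,v$ near $s_0$, expands the area of the triangle at $\gamma(s_1),\gamma(s_2),\gamma(s_3)$ to leading order as $\tfrac14(s_2-s_1)(s_3-s_1)(s_3-s_2)\kappa(s_0)$, while the side lengths satisfy $|\gamma(s_i)-\gamma(s_j)|=|s_i-s_j|(1+o(1))$. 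Substituting into Melnikov's formula \eqref{meln} yields $R_n\to 1/\kappa(s_0)$ if $\kappa(s_0)>0$, and $R_n\to+\infty$ if $\kappa(s_0)=0$; either conclusion contradicts $R_n\to 0$.

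The main subtlety I expect is keeping the Taylor remainder in the area below the main term in all sub-regimes, including the degenerate case where two of the three parameter values are much closer to each other than to the third. The fix is to choose the \emph{middle} of the three parameter values as the base point $s_i$ in the integral representation above: with that choice, the area of the integration rectangle equals the product of the two smaller pairwise parameter differences, which is precisely the scaling of the leading term, so the remainder controlled by the $C^2$ modulus of continuity of $\gamma''$ enters only as a relative correction that tends to $0$ uniformly in the shape of the triangle. Once this bookkeeping is in place, the rest is a direct substitution into \eqref{meln}.
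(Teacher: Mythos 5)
Your proof is correct and shares the high-level architecture of the paper's argument: both assume $r_g(\G)=0$, extract a sequence of shrinking triangles accumulating at a point $z_0\in\G$, and show via Melnikov's formula \eqref{meln} that $R^{-1}(a_nb_nc_n)$ converges to the curvature at $z_0$, contradicting $R_n\to 0$. Where you differ is in how the limit is extracted. You write the (twice the signed) area as
\[
\det\bigl[\gamma(s_j)-\gamma(s_i),\gamma(s_k)-\gamma(s_i)\bigr]=\int_{s_i}^{s_j}\!\!\int_{s_i}^{s_k}\det[\gamma'(u),\gamma'(v)]\,du\,dv,
\]
Taylor-expand the integrand, and then notice, correctly, that the naive remainder bound $o(|v-u|)$ is not enough when two parameters nearly coincide; anchoring the rectangle at the \emph{middle} parameter makes both the main term and the error scale with the product of the two smaller gaps, so the relative error is uniformly $o(1)$. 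The paper sidesteps this bookkeeping entirely by rewriting the quantity $\Im\,(z(t_1)-z(t_2))\ovl{(z(t_2)-z(t_3))}/\prod(t_i-t_j)$ as a bilinear expression in first- and second-order divided differences of $x$ and $y$: since for $C^2$ functions the second-order divided difference satisfies $[t_1t_2t_3]_f=\tfrac12 f''(\xi)$ with $\xi$ between $\min t_i$ and $\max t_i$, the limit $\tfrac12 f''(0)$ holds regardless of the relative spacing of the $t_i$, with no degenerate sub-regimes to check. Your version is more hands-on and forces you to confront the worst-case triangle shape explicitly, which you do correctly; the paper's version buys the same uniformity for free from a classical mean-value identity. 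Both are valid; the paper's is a bit shorter once the divided-difference machinery is invoked.
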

\begin{proof}
Assume on the contrary, that $r_g(\G)=0$, $\G$ is the $C^2$-smooth Jordan curve or arc. Then there is a sequence of triangles
$\De(a_nb_nc_n)$ with $R(a_nb_nc_n)\to 0$ as $n\to\infty$. By taking subsequences, if needed, we have $a_n\to a\in\G$, and so
$b_n,c_n\to a$, $n\to\infty$.

On the other hand, we will show that
\begin{equation}\label{cur1}
\lim_{n\to\infty} R^{-1}(a_nb_nc_n)=\tau(a)<\infty,
\end{equation}
$\tau(a)$ is the curvature of $\G$ at $a$, which will lead to contradiction.
Let
$$ \G=\{z(t)=x(t)+iy(t)\}, \quad (a_nb_nc_n)=(z(t_1)z(t_2)z(t_3)), \quad a=z(0). $$
We apply again \eqref{meln}
$$ \Im\,(z(t_1)-z(t_2))\ovl{(z(t_2)-z(t_3))}= (y(t_1)-y(t_2))(x(t_2)-x(t_3))-(x(t_1)-x(t_2))(y(t_2)-y(t_3)), $$
so
\begin{equation*}
\begin{split}
\frac{\Im\,(z(t_1)-z(t_2))\ovl{(z(t_2)-z(t_3))}}
{(t_{1}-t_{2})(t_{2}-t_{3})(t_{1}-t_{3})} &= \frac{[t_1t_2]_y\,[t_2t_3]_x-[t_1t_2]_x\,[t_2t_3]_y}{t_1-t_3} \\
&=[t_1t_2t_3]_y\,[t_2t_3]_x-[t_1t_2t_3]_x\,[t_2t_3]_y,
\end{split}\end{equation*}
where
$$ [t_it_k]_f:=\frac{f(t_i)-f(t_k)}{t_i-t_k}, \qquad
   [t_1t_2t_3]_f:=\frac{[t_1t_2]_f-[t_2t_3]_f}{t_1-t_3} $$
are divided differences of the first and second order, respectively. The limit relation below is one of
the basic properties of divided differences
$$ \lim_{t_i\to0} [t_1t_2t_3]_f=\frac12\,f''(0), $$
provided $f$ is a $C^2$-smooth function at the origin. Hence
$$ \lim_{t_i\to0} \frac{\Im\,(z(t_1)-z(t_2))\ovl{(z(t_2)-z(t_3))}}
{(t_{1}-t_{2})(t_{2}-t_{3})(t_{1}-t_{3})}=\frac{y''(0)x'(0)-x''(0)y'(0)}2\,, $$
and finally by \eqref{meln} and the definition of the curvature
$$ \lim_{t_i\to0} R^{-1}(z(t_1)z(t_2)z(t_3))=\frac{|y''(0)x'(0)-x''(0)y'(0)|}{|z'(0)|^3}\,. $$
The latter is \eqref{cur1}, as claimed.
\end{proof}

{\bf Remark}. Proposition \ref{curve} is a particular case of a much more sophisticated result
\cite[Theorem 1, (iii)]{schmos}, which claims that $\G$ has finite global curvature if and only if its
arc length parametrization $\tau(s)$ is smooth, and $\tau'$ satisfies the Lipschitz condition with
Lipschitz constant $r_g^{-1}(E)$.

\smallskip

Yet another example arises in the theory of elliptic equations in domains with non-smooth boundaries \cite{ad}.

\begin{definition}\label{bc}
A planar domain $\O$ with the boundary $\partial\O$ is said to satisfy the uniform ball condition
if there is $r>0$ so that for each $x\in\partial\O$ there is a ball $B$ of radius $r$ with the properties
$$ B\subset \O,  \qquad  x\in\partial B. $$
Let $\G$ be a Jordan curve, $\C\backslash\G=\O_i\cup\O_o$, interior and exterior domains of $\G$.
We say that $\G$ is BC-curve if both $\O_i$ and $\O_o$ satisfy the uniform ball condition.
A Jordan arc $\g$ is BC-arc if there is a BC-curve $\G\supset\g$.
\end{definition}

It is easy to see that if $E$ is an $r$-convex compact set then $\O=\C\backslash E$ satisfies the uniform ball condition. Indeed,
let $x\in\partial\O$. There is a sequence of points $z_n\in\O$ so that $z_n\to x$ as $n\to\infty$.
Take the corresponding sequence of disks $B_n$ of radius $r$, $z_n\in B_n\subset\O$. Then a certain subsequence of
$B_n$ converges to $B$ from definition \ref{bc}.

\begin{proposition}\label{bcr}
A Jordan curve (arc) is $r$-convex if and only if it is a BC-curve (arc).
\end{proposition}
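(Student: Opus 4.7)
The plan is to handle the Jordan curve equivalence first and then deduce the arc version. For the forward direction ($r$-convex $\Rightarrow$ BC-curve), I would refine the compactness argument outlined in the paragraph preceding Proposition \ref{bcr}. Fix $x\in\G$; since $\G$ separates $\C$ into the components $\O_i$ and $\O_o$, approach $x$ from each side by sequences $z_n^{(i)}\in\O_i$ and $z_n^{(o)}\in\O_o$. By $r$-convexity, every $z_n^{(i)}$ sits inside some open disk $B(w_n^{(i)},r)\subset\C\backslash\G$; since the disk is connected and meets $\O_i$, it is entirely contained in $\O_i$. After extracting a convergent subsequence $w_n^{(i)}\to w_i$, the inequality $|w_n^{(i)}-x|\ge r$ (since $x\in\G$ lies outside $B(w_n^{(i)},r)$), together with $|w_n^{(i)}-z_n^{(i)}|<r$ and $z_n^{(i)}\to x$, forces $|w_i-x|=r$; a standard limit argument rules out $B(w_i,r)\cap\G\ne\emptyset$, so $B(w_i,r)\subset\O_i$ with $x\in\partial B(w_i,r)$. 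The same reasoning on the other side supplies the required disk in $\O_o$.

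For the converse (BC-curve $\Rightarrow$ $r$-convex), the essential preliminary is a rigidity at each $x\in\G$: the BC disk centers $c_i,c_o$ and $x$ are collinear, with $x$ the midpoint of $[c_i,c_o]$. Indeed, $B(c_i,r)\subset\O_i$ and $B(c_o,r)\subset\O_o$ are disjoint, giving $|c_i-c_o|\ge 2r$, while the triangle inequality yields $|c_i-c_o|\le 2r$; equality forces collinearity. Consequently $\G$ is locally sandwiched between two tangent disks of radius $r$, with a well-defined inward normal direction $n=n(x)$ at every point. Now take $z\in\C\backslash\G$, say $z\in\O_i$. If $d(z,\G)\ge r$, then $B(z,r)$ is a connected open disk disjoint from $\G$, hence contained in $\O_i$. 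If $d(z,\G)<r$, pick a nearest point $y\in\G$; the main step is to prove that $z=y+sn(y)$ with $s=|z-y|<r$, because then $z\in B(y+rn(y),r)\subset\O_i$. To see this, write $z=y+su$ for some unit vector $u$, parametrize $\xi\in\G$ near $y$ as $\xi=y+t\tau+f(t)n$ with $|f(t)|\le t^2/(2r)$ (a consequence of the two-sided sandwich), and expand $|z-\xi|^2$ to first order in $t$: any nonzero tangential component of $u$ would produce a linear term in $t$ contradicting the minimality at $t=0$, forcing $u=n$.

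The arc case follows from the curve case. If $\g$ is a BC-arc then by definition $\g\subset\G$ for some BC-curve $\G$, which is $r$-convex by the above; for $z\in\C\backslash\g$ either $z\notin\G$ (and the $r$-convexity of $\G$ supplies the disk directly) or $z\in\G\backslash\g$, in which case I would take the BC disk at $z$ and shift its center a distance $\e$ along the inward normal to put $z$ in its interior; for $\e$ small compared to $d(z,\g)>0$, the shifted disk still avoids $\g$. Conversely, given an $r$-convex Jordan arc $\g$, one can complete it to a Jordan curve $\G\supset\g$ remaining $r$-convex by attaching two circular arcs of radius $r$ at its endpoints (the $r$-convexity near $\g$ is inherited from $\g$, and the circular arcs carry their own radius-$r$ tangent disks on either side), then apply the forward direction. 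The principal obstacle I anticipate is the normal-alignment step in the converse direction: without any a priori regularity of $\G$, the conclusion $u=n$ must be squeezed purely out of the tangent-cone information provided by the two BC disks, which is the crucial point where the BC hypothesis is genuinely used.
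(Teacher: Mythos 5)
Your converse direction ($\text{BC}\Rightarrow r$-convex) takes a genuinely different and substantially heavier route than the paper's. Where you set up a local graph parametrization $\xi=y+t\tau+f(t)n$ and extract the normal alignment $u=n$ from a first-order expansion of $|z-\xi|^2$, the paper avoids any parametrization and any discussion of regularity of $\G$ with a one-line disk argument: $B(z,d(z))$ and $B_o$ are disjoint open disks whose closures meet at the nearest point $\z$, hence $|z-c_o|=d(z)+r$ forces $z,\z,c_o$ collinear (external tangency); so $B(z,d(z))$ shares the tangent line with $B_i$ at $\z$, lies on the same side, and being of smaller radius is contained in $B_i$, giving $z\in B_i$ immediately. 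Your "rigidity" lemma (collinearity of $c_i,\z,c_o$) is correct and is the same observation, but you then route through the graph/expansion machinery instead of applying it directly to the disk $B(z,d(z))$. The gap you correctly flag is real: the claim that $\G$ is locally a graph in normal coordinates with $|f(t)|\le t^2/(2r)$ needs an argument (a Jordan curve sandwiched between two tangent disks need not be a graph over the tangent direction near $y$ without further work), and the subsequent minimality computation needs to be phrased without assuming differentiability of $f$. The paper's disk-containment argument makes all of this unnecessary.

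For the arc case, your $\text{BC-arc}\Rightarrow r$-convex argument (shifting the supporting disk at $z\in\G\setminus\g$ by a small amount to cover $z$ while staying disjoint from $\g$) is exactly the paper's. Your proposal for the other direction — extending an $r$-convex arc $\g$ to a BC-curve $\G$ by attaching circular arcs of radius $r$ at the endpoints — is not substantiated: nothing ensures the extension is a Jordan curve (the attached arcs may intersect $\g$ or each other), nor that $r$-convexity survives the gluing near the endpoints. Note, however, that the paper's own proof handles only the BC $\Rightarrow r$-convex direction in the body and delegates the forward direction to the preceding remark, which establishes a uniform ball condition for $\C\setminus\g$ but does not by itself produce a BC-curve $\G\supset\g$; so the construction you are struggling with is a point the paper also leaves implicit.
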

\begin{proof} Due to the above remark we need to show that each BC-curve (arc) is $r$ convex.
Let $z\in\O_i$, and $d(z)=d(z,\G)<r$. Take $\z\in\G$ with $|z-\z|=d(z)$ and the ``supporting'' disks $B_i$ and $B_0$ of radius
$r$ at the point $\z$ as in definition \ref{bc}. Since $B_i\subset\O_i$, $B_o\subset\O_o$, the disks touch each other at $\z$.
The disk $B(z,d(z))\subset\O_i$ passes through $\z$, hence it is necessarily contained in $B_i$ and touches it at $\z$.
So $z\in B_i$, as needed. The argument for $z\in\O_o$ is the same.

As for the BC arc $\g$, take the BC curve $\G\supset\g$. Let $z\in\G\backslash\g$. Then the inner supporting disk $B_i$ at $z$
is disjoint with $\g$, so it can be shifted appropriately so that $z$ belongs to the shifted one, which is still disjoint with $\g$.
\end{proof}

A simple example $E=\{i/n\}\cup\{1/n\}\cup\{0\}$, $n=1,2,\ldots$, displays the set such that $E$ is not $r$-convex, but
$\C\backslash E$ satisfies the uniform ball condition.
\medskip

Given a compact set $E$  consider the unbounded open set $\O_t:=\{z\in\C: \ d(z)>t\}$, $t\ge0$. It is clear that $\{\O_t\}$ forms a
monotone decreasing family of sets.

Let $\Th_t$ be the unbounded component of $\O_t$, $\Th_t\subseteq\O_t$. It is not hard to manufacture a compact set $E$ so that $\Th_0=\O_0$,
i.e., $\C\backslash E$ is connected, but $\Th_t\not=\O_t$ for all $t>0$.
We show that this is not the case for $r$-convex sets, and the situation is stable for small enough $t$.

\begin{theorem}\label{t1}
Let $E$ be an $r$-convex compact set, $r>0$, and $\Th_0=\O_0$. Then there is $t_0=t_0(E)$ such that $0<t_0\le r/4$, and
$\Th_t=\O_t$ for $0\le t\le t_0$.
\end{theorem}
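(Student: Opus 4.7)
I would argue by contradiction. Since $E$ is compact, $d(z,E)\to\infty$ as $|z|\to\infty$, so there is $R=R(E)$ with $\{|z|>R\}\subset\Omega_t$ for every sufficiently small $t>0$; this exterior region is connected and contains a neighbourhood of $\infty$, hence lies inside $\Theta_t$. Any component of $\Omega_t$ distinct from $\Theta_t$ must therefore be contained in $\overline{B(0,R)}$. If the theorem failed, one could extract a sequence $t_n\searrow 0$ and bounded components $U_n$ of $\Omega_{t_n}$ disjoint from $\Theta_{t_n}$.

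For each $n$, pick $z_n\in U_n$ maximising $d(\cdot,E)$ over $\overline{U_n}$; since $\partial U_n\subset\{d=t_n\}$ and $U_n$ is a nonempty open set, $\delta_n:=d(z_n,E)>t_n$ and the maximum is attained in the interior. By $r$-convexity of $E$ at $z_n$, there is an open disk $B(x_n,r)\subset\Omega$ containing $z_n$, so $d(x_n,E)\ge r$ and $x_n\in\Omega_{t_n}$. The entire argument hinges on a connectivity lemma: for every open disk $B(x,r)\subset\Omega$ and every $t$ below an explicit threshold $t_0=t_0(E)\le r/4$, the intersection $B(x,r)\cap\Omega_t$ is connected.

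Granting the lemma, $z_n$ and $x_n$ lie in the same connected set $B(x_n,r)\cap\Omega_{t_n}$, so $x_n\in U_n$ and $\delta_n\ge r$. One then applies the lemma iteratively: from each point $w$ already known to lie in $U_n$, $r$-convexity at $w$ produces a new disk $B(x_w,r)\subset\Omega$, and the lemma gives $B(x_w,r)\cap\Omega_{t_n}\subset U_n$, enlarging the reach by up to $2r$. After finitely many iterations the accumulated set escapes $\overline{B(0,R)}$, contradicting the confinement of $U_n$.

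The main obstacle is proving the lemma. The forbidden set $B(x,r)\setminus\Omega_t=\{w\in B(x,r):d(w,E)\le t\}$ lies in the thin annulus $\{r-t\le|w-x|<r\}$---any such $w$ is within $t$ of some $\eta\in E$ with $|\eta-x|\ge r$---and decomposes as a union of convex caps $C_\eta=B(x,r)\cap\overline{B(\eta,t)}$ protruding inward from $\partial B(x,r)$. The core $B(x,r-t)$ is untouched and connected, so disconnection of $B(x,r)\cap\Omega_t$ could occur only if these caps chained together into a closed barrier encircling $x$ in the annulus. Such a chain would require the corresponding $E$-points to have consecutive spacing at most $2t$, and in the limit $t\to 0$ would yield via compactness a closed Jordan curve in $E$ encircling $x$---contradicting the hypothesis $\Theta_0=\Omega_0$ that $\Omega$ is connected. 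The explicit bound $t_0\le r/4$ emerges from the quantitative version of this compactness/contradiction argument, encoding how small $t$ must be to rule out encircling chains uniformly over all disks $B(x,r)\subset\Omega$.
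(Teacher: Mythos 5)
Your approach is genuinely different from the paper's, which in its Step 3--5 shows that a bounded component of $\O_\eta$ forces the existence of a triangle with vertices in $E$, circumradius $<4\eta\le r$, and an empty interior intersection with $E$, and then derives a contradiction with $r$-convexity via Proposition~\ref{p2}. Your proposal instead rests on a ``connectivity lemma'' for $B(x,r)\cap\O_t$ and an iterative escape argument. Both of these carry real gaps.

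First, the iteration. You establish (granting the lemma) that $x_n\in U_n$ with $d(x_n)\ge r$, so $B(x_n,r)\cap\O_{t_n}\subset U_n$. But then ``applying the lemma iteratively'' does not obviously enlarge anything: once $d(w)\ge r$, $r$-convexity at $w$ can return $B(w,r)$ itself, so the ``reach'' need not grow by $2r$, nor by any positive amount, and there is no mechanism forcing the accumulated set out of $\overline{B(0,R)}$. What one actually needs is that every point of $\O_r\cap \overline{B(0,R)}$ can be joined to $\infty$ inside $\O_{t_0}$; this is a uniform-depth statement over a set that may have infinitely many pieces and requires an argument. The paper handles exactly this in Steps 1--2 by taking a finite $r/2$-net of $\O_r\cap B(0,2S)$, joining each net point to $\infty$ by a path in the connected set $\O_0$, letting $\d>0$ be the minimal distance of these finitely many paths to $E$, and then setting $t_0=\min(\d,r/4)$. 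Without some replacement for this step, your argument does not close.

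Second, the connectivity lemma itself. The claim is plausible, but your sketch is not a proof, and the route through $\Th_0=\O_0$ alone is insufficient. Disconnection of $B(x,r)\cap\O_t$ need not be caused by a chain of caps that \emph{encircles} $x$: a few caps near a short arc of $\partial B(x,r)$ can in principle wall off a small region of the outer annulus $\{r-t\le|w-x|<r\}$ without surrounding the centre, so the passage to a closed curve separating $x$ from $\infty$ is not automatic. Ruling out the ``local pocket'' case is where the $r$-convexity of $E$ (and not merely connectedness of $\O$) must enter, and that is essentially what the paper's Steps 3--4 accomplish: a maximiser $z_0$ of $d(\cdot)$ over the pocket, and a triangle $\De(abc)\subset E\cup\{\}$ of small circumradius contradicting $r$-convexity. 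Your compactness discussion also needs to be made quantitative: the threshold cannot be a universal fraction of $r$, since for an arc $E$ of a circle of radius $r$ with a small angular gap $2\e$ (an $r$-convex set by Proposition~\ref{p2}), the point of $\partial B(x,r)$ at the gap lies within roughly $r\e$ of $E$, so $B(x,r)\cap\O_t$ pinches off already for $t$ of order $r\e$; this confirms that $t_0$ must genuinely depend on $E$ and cannot be extracted purely from $r$-convexity plus a soft compactness limit at a single $x$.

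In short: the connectivity-lemma strategy is an interesting alternative, and the first part (if $U_n\ne\emptyset$ then $U_n$ meets $\O_r$) is a nice use of $r$-convexity, but as written both the proof of the lemma and the escape-to-infinity step are missing the essential ingredients that the paper supplies via the finite net (Steps 1--2) and the triangle construction (Steps 3--5).
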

\begin{proof} Denote by $S=S(E):=\max_{\z\in E} |\z|$, and assume that $r\le S(E)$.

The proof is split into several steps.

\underline{Step 1}. The set $\hat\O:=\O_r\cap B(0,2S)$ is relatively compact, so it has a finite $r/2$-net $Z=\{z_j\}_{j=1}^N$,
$$ Z\subset \hat\O, \qquad {\rm dist}(z',Z)\le\frac{r}2<r, \quad \forall z'\in\hat\O. $$
Since $\O_0=\Th_0=\C\backslash E$ is connected, we can find pathes $\G_j: \ [0,\infty)\to \Th_0$ with
$$ \G_j(0)=z_j, \qquad \G_j(\tau)\to\infty, \quad \tau\to\infty; \quad j=1,2,\ldots,N. $$
We put $\d:=\frac12\,\min_{j,\tau} {\rm dist}(\G_j(\tau),E)>0$, so that $\G_j\subset\Th_\d$, $j=1,2,\ldots,N$.

\underline{Step 2}. Let $z'\in\hat\O$, there is $z_k\in Z$, $1\le k\le N$ such that $|z'-z_k|<r$. In other words,
$z'\in B(z_k,r)$, $z_k\in B(z',r)$. Put
$$ B_1:=B(z_k,r)\bigcup B(z',r), \qquad \{\xi_{\pm}\}:=\partial B(z_k,r)\bigcap \partial B(z',r). $$
Since both $z'$ and $z_k$ are in $\O_r$, then the closure $\ovl{B}_1\subset\O_0$, so
\begin{equation*}
\begin{split}
{\rm dist}([z',z_k],E) &> {\rm dist}([z',z_k],\partial B_1)={\rm dist}([z',z_k],\{\xi_{\pm}\}) \\
&=\sqrt{r^2-\frac{|z'-z_k|^2}4}>\frac{\sqrt{3}}2\,r.
\end{split}
\end{equation*}

Now, take $t_0:=\min(\d,r/4)$, so $[z',z_k]\subset\O_t$ for $t\le t_0$, and $\G_k\subset\O_t$  by Step 1. Hence
$[z',z_k]\cup\G_k\subset\O_t$, and as the set in the left hand side is a path from $z'$ to infinity, we conclude
$$ [z',z_k]\cup\G_k\subset\Th_t \Rightarrow z'\in\Th_t, \quad \forall t\le t_0. $$
Clearly, $B^c(0,2S)\subset\Th_t$ for such $t$, so, finally, $\O_r\subset\Th_t$, $t\le t_0$.

\underline{Step 3}. Assume that for some $\eta$, $0<\eta\le t_0$ the statement is wrong, so $\O_\eta$ has a bounded
component $D$, $D\cap\Th_\eta=\emptyset$. We want to show that
\begin{equation}\label{01}
d(z)\le \sqrt{2}\,\eta, \qquad \forall z\in D.
\end{equation}
Let $z\in D$. Note that $d(z)\le r$, for otherwise $z\in\O_r\subset\Th_\eta$ by Step 2, and hence $z\in D\cap\Th_\eta$,
that is impossible. By the definition of $r$-convexity $z\in B(z',r)\subset\O_0$, so $z'\in\O_r\subset\Th_\eta$, and,
in particular, $z'\not=z$. Hence the segment $[z',z]$ meets the boundary $\partial D$, so there is a point $\z\in[z',z]$
with $d(\z)=\eta$, and we conclude that
\begin{equation}\label{02}
{\rm dist}([z',z], E)\le\eta.
\end{equation}

\smallskip

Let us examine the mutual configuration of two disks, $B(z',r)$ and $B(z,d(z))$, each of which belongs to $\O_0$. As the circle
$\partial B(z,d(z))$ contains points from $E$, it is clear that the closed disk $\ovl{B(z,d(z))}$ cannot lie inside $B(z',r)$.
Hence either the smaller disk $B(z,d(z))$ touches the bigger one from within, and in this case the touching point $\xi\in E$
(which implies $d(z)={\rm dist}([z',z], E)$, and \eqref{01} follows from \eqref{02}), or the disks have a proper intersection. Denote
$$ B_2:=B(z',r)\bigcup B(z,d(z))\subset\O_0, \quad \{\xi_{\pm}\}:=\partial B(z',r)\bigcap \partial B(z,d(z)). $$
Then
\begin{equation}\label{03}
{\rm dist}([z',z], E)\ge {\rm dist}([z',z], \partial B_2)={\rm dist}([z',z], \xi_+)={\rm dist}([z',z], \xi_-)\ge h,
\end{equation}
where $h$ is the length of the altitude from the vertex $\xi_+$ in the triangle $\De(z',z,\xi_+)$. If this altitude crosses the side $[z',z]$ then
$$ \sqrt{r^2-h^2}+\sqrt{d^2(z)-h^2}=|z-z'|<r, \quad 2h^2>d^2(z)+2\sqrt{(r^2-h^2)(d^2(z)-h^2)}>d^2(z), $$
so $d(z)<\sqrt{2}h$, and \eqref{01} follows from \eqref{02},\eqref{03}. If the altitude crosses the extension of the side $[z',z]$,
one has $d(z)={\rm dist}([z',z], E)$, and \eqref{01} holds again.

The inclusion $z\in D\subset\O_\eta$ means $d(z)>\eta$, so we come to the following two-sided bound
\begin{equation}\label{04}
\eta<r':=\sup_{z\in D}d(z)\le\sqrt2\,\eta.
\end{equation}
Let $\{z_n\}\subset D$ so that $d(z_n)\to r'$. We can assume $z_n\to z_0$, and hence there is a point $z_0\in D$ with
$\eta<d(z_0)=r'\le\sqrt2\,\eta$.

\underline{Step 4}. We show here that there is a triangle $\De(abc)$ of the circumradius $R(\De)<4\eta$ such that
\begin{equation}\label{05}
\De(abc)\bigcap E=\emptyset, \qquad E_1:=\ovl{\De(abc)}\bigcap E\supset\{a,b,c\}.
\end{equation}

Take the point $z_0$ from Step 3 and consider the disk $B(z_0,r')$. Its boundary has nonempty intersection with $E$. If the circle
$\partial B(z_0,r')$ contains 3 different points from $E$, then in view of \eqref{04} we are done. Assume that
$\partial B(z_0,r')\cap E=\{\z_1\}$, or $\partial B(z_0,r')\cap E=\{\z_1,\z_2\}$, and the points $\z_1,\z_2$ do not belong to a
diameter of the circle. The same argument as in the proof of Proposition \ref{mcur} shows that such configurations cannot occur.

Therefore we can focus upon the case $z_0=0$, $\partial B(z_0,r')\cap E=\{a,b\}$, $a=ir'$, $b=-ir'$ (after an appropriate affine
transformation of the plane). Put
$$ G:=\{z=x+iy:\ 0<x\le r', \ |y|\le r'\}, $$
then $G\cap E\not=\emptyset$, since otherwise the circle could be shifted to the right to have $\ovl{B(z_0',r')}\cap E=\emptyset$,
which, as we have already seen, contradicts the maximality of $r'$. Let $h$ be the least (in absolute value) nonzero number such
that the triangle $\De(abc_h)$, $c_h=r'+ih$, contains points from $E$. The number $h$ exists since by the assumption the point
$C_0=r'\not\in E$, and $0<|h|\le r'$. Clearly, such points from $E$ belong to the side $ac_h$ for $h>0$ ($bc_h$ for $h<0$).
If we choose the point $c\in E$ on the corresponding side, then \eqref{05} holds.
The triangle $\De(abc)$ is either acute or rectangular. For its sides we have by \eqref{04}
\begin{equation}\label{06}
M:=\max(|ab|,|ac|,|bc|)\le\sqrt5 r'\le\sqrt{10}\eta,
\end{equation}
and by the known upper bound for the circumradius of such triangle $R(\De)\le M<4\eta$.

\underline{Step 5}. The choice of $t_0=\min(\d,r/4)$ implies $R(\De)<4\eta\le4t_0\le r$. By Proposition~\ref{p2}
(see Remark after its proof) the set $E_1$ \eqref{05} is not $r$-convex, and ${\rm conv}_r(E_1)\cap\De(abc)\not=\emptyset$. Hence,
${\rm conv}_r(E)\cap\De(abc)\not=\emptyset$, which contradicts the $r$-convexity of $E$.

To remove the assumption $r\le S(E)$, note that if $r>S(E)$, then $E$ is $r_1$-convex with $r_1=S(E)$. So for the value $t_0$ one has
$0<t_0\le r_1/4< r/4$, as needed. The proof is complete.
\end{proof}

Note that for $E=\{\z_1,\ldots,\z_N\}$ the result is obvious with
\begin{equation}\label{f1}
0\le t\le t_1(E):=\frac12\,\d(E), \qquad \d(E):=\min_{i\not=k}|\z_i-\z_k|.
\end{equation}

\section{Lower bounds for Green's functions}

In what follows we denote by $C=C(E)$ different positive constants which depend only on $E$, and particular values of which are immaterial.

We will be dealing with domains $\O=\ovl\C\backslash E$, $E$ a compact set in $\C$.

\begin{definition}
 The Green's function for the domain $\O$ is a map $G_\O:\ \O\times\O\to (-\infty,\infty]$, such that for each $w\in\O$
\begin{itemize}
  \item [(i)] $G_\O(\cdot,w)$ is harmonic on $\O\backslash\{w\}$, and bounded from above and below outside each neighborhood of $w$;
  \item [(ii)] $G_\O(w,w)=\infty$, and as $z\to w$,
  \begin{eqnarray*}
  G_\O(z,w) &=& -\log|z-w|+O(1), \ \ \ w\not = \infty, \\
  G_\O(z,w) &=& \log|z|+O(1), \ \ \ w = \infty;
  \end{eqnarray*}
  \item [(iii)] $G_\O(z,w)\to 0$, as $z\to\z$, nearly everywhere on $\z\in\partial \O$.
\end{itemize}
\end{definition}

Let us list some basic properties of the Green's functions in the way we need them later on (cf., e.g., \cite[Section 4.4]{Ran}):
\begin{enumerate}
  \item If $\partial\O$ is non-polar, then there exist a unique Green's function $G_\O$ for $\O$;
  \item $G_\O(z,w)=G_\O(w,z)>0$, moreover, if $\O'$ is a relatively compact in $\ovl\C$ open subset of $\O$,
  then $\min_{z,w\in\O'} G_\O(z,w)=C(\O,\O')>0$;
  \item If $\O'\subset\O''$ be domains in $\ovl\C$ with non-polar boundaries, then
      $$ G_{\O'}(z,w)\le G_{\O''}(z,w), \quad z,w\in\O'. $$
\end{enumerate}

The notion of the Harnack distance proves useful for our reasoning (see \cite[pp. 14--15]{Ran}).

\begin{definition}
Let $D$ be a domain in $\ovl\C$. Given $z,w\in D$, the Harnack distance between $z$ and $w$ is the smallest number
$\tau_D(z,w)$ so that for every positive harmonic function $h$ on $D$,
$$  \tau_D^{-1}(z,w) h(w)\le h(z)\le \tau_D(z,w) h(w). $$
\end{definition}
It is known that
\begin{enumerate}
  \item $\tau_D(z,w)=\tau_D(w,z)\ge 1$, $\tau_D(z,z)=1$;
  \item $\tau_D(z_1,z_3)\le \tau_D(z_1,z_2)\tau_D(z_2,z_3)$, $z_1,z_2,z_3\in D$;
  \item $\tau_D$ is a continuous function of both variables, in particular, if $D_1$ is a relatively compact in with respect to topology
  $\ovl\C$, open subset of $D$, then $\max_{z,w\in D_1}\tau_D(z,w)=C(D,D_1)<\infty$.
\end{enumerate}

\medskip

Given a compact set $E$, we remind the notation $\O_t=\{z\in\ovl\C:\ d(z)>t\}$ (we view $\O_t$ as an open subset of $\ovl\C$).
If $E$ is an $r$-convex compact set with connected complement, then by Theorem \ref{t1} $\O_t$ is a subdomain of $\ovl\C$ for small
enough $t$. Its boundary $\partial\O_t=\{z:d(z)=t\}$ is non-polar, so the Green's function $G_t$ for $\O_t$ exists and is unique.

The main technical tool is the following lower bound for $G_t(z,\infty)$.

\begin{lemma}\label{l1}
Let $E$ be an $r$-convex compact set with connected complement $\O=\ovl\C\backslash E$. Then for $0<t\le t_0$
\begin{equation}\label{10}
G_{t/5}(z,\infty)\ge C\,\frac{d(z)}{|z|+1}\,, \quad z\in\O_t.
\end{equation}
\end{lemma}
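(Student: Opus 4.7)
The plan is to establish the bound in two regimes and combine them: large $|z|$ via explicit comparison with the Green's function of an exterior disk, and bounded $|z|$ via a Poisson-kernel estimate on a disk supplied by $r$-convexity. Let $S:=\max_{\zeta\in E}|\zeta|$ and assume $r\leq S$ as in the last paragraph of the proof of Theorem~\ref{t1}. For $|z|\geq 3S$, the inclusion $\C\setminus\overline{B(0,S+t/5)}\subset\Omega_{t/5}$ and monotonicity of Green's functions (property~3) give
$$G_{t/5}(z,\infty)\geq\log\frac{|z|}{S+t/5}\geq\log 2,$$
majorising $d(z)/(|z|+1)\leq 1$. For $|z|\leq 3S$ with $d(z)\geq r$, a Harnack chain in $\Omega_{t/5}$ of a bounded (depending only on $E$) number of steps propagates this constant from a reference point of modulus $3S$, yielding $G_{t/5}(z,\infty)\geq c_0>0$, which suffices because $d(z)/(|z|+1)$ is bounded on this range.

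The principal case is $|z|\leq 3S$ with $d(z)<r$. Let $\zeta\in E$ realise $d(z)$. The uniform ball condition for $\Omega_0$ (which follows from $r$-convexity, cf.\ the remark preceding Proposition~\ref{bcr}) supplies a tangent disk $B(y,r)\subset\Omega_0$ with $\zeta\in\partial B(y,r)$; I claim it can be chosen in a near-tangent-normal configuration so that
$$(r-t/5)-|z-y|\geq c_1\,d(z)\qquad\text{for some } c_1=c_1(E)>0,$$
which already forces $z\in B(y,r-t/5)$. The shrunken disk $B(y,r-t/5)$ lies in $\Omega_{t/5}$ (since $d(w)\geq r-|w-y|>t/5$ inside it), the center satisfies $d(y)\geq r$ and hence $G_{t/5}(y,\infty)\geq c_0$ by the previous step, and the Poisson kernel bound on $B(y,r-t/5)$ yields
$$G_{t/5}(z,\infty)\geq \frac{(r-t/5)-|z-y|}{2(r-t/5)}\,G_{t/5}(y,\infty)\geq \frac{c_0 c_1}{2r}\,d(z)\geq C\,\frac{d(z)}{|z|+1},$$
since $|z|\leq 3S$.

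The main technical obstacle is the tangent-disk selection. While $r$-convexity supplies \emph{some} tangent ball of radius $r$ at $\zeta$, its direction need not a priori align with $(z-\zeta)/d(z)$, and $(r-t/5)-|z-y|$ could in principle be much smaller than $d(z)$ (consider $E$ a finite set where the tangent at one point is obstructed by another point of $E$). I would resolve this via the non-shift and non-inflation arguments from the proof of Proposition~\ref{mcur}: the nearest-point property places $(z-\zeta)/d(z)$ in the outward normal cone of $E$ at $\zeta$, and the admissible tangent directions at $\zeta$ (those producing a ball $B(y,r)\subset\Omega_0$) cluster within an $O(\sqrt{d(z)/r})$-cone of this normal, which is sharp enough to secure $(r-t/5)-|z-y|\geq c_1\,d(z)$ uniformly in $z$.
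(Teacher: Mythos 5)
Your overall decomposition into ``far from $E$'' and ``close to $E$'' regimes matches the spirit of the paper, and the far regimes (large $|z|$ and $d(z)\geq r$) are handled essentially as the paper does for $d(z)>t_0/2$ (Green's function monotonicity plus compactness/Harnack distance). The genuine gap is in the principal case. You want a tangent disk $B(y,r)\subset\Omega$ at the nearest point $\zeta$ with a quantitative alignment $(r-t/5)-|z-y|\geq c_1 d(z)$, which is what a Harnack/Poisson estimate \emph{centered at $y$} requires. But $r$-convexity only hands you a disk $B(z',r)\subset\Omega$ \emph{containing $z$}; it gives no control on the direction $y-\zeta$ of a tangent ball at $\zeta$, nor any quantitative margin keeping $z$ well inside a shrunken tangent disk. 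You acknowledge this and appeal to an ``$O(\sqrt{d(z)/r})$-cone'' of admissible tangent directions, but no proof of that clustering is given, and even if true the exponent $\sqrt{\,\cdot\,}$ is not obviously the right scale: what you actually need is that some admissible direction lies within a \emph{fixed} angle $\theta_0(c_1)$ of the normal $(z-\zeta)/d(z)$, and the existence of such a direction for an arbitrary $r$-convex $E$ has to be argued from scratch.

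The paper avoids this alignment problem entirely. It works directly with the $r$-convexity disk $B(z',r)\ni z$, puts the small disk $B(z,r_2)$ with $r_2=d(z)-t/5$ inside $\Omega_{t/5}$, and applies the \emph{Mean Value Property} to the function $G_{t/5}(\cdot,z)$ over the sphere $\partial B(z',r_1)$ with $r_1=|z-z'|-t/5$. The key geometric observation is property (c): $B(z',r_1)$ and $B(z,r_2/4)$ overlap, so the arc $L=\partial B(z',r_1)\cap B(z,r_2/2)$ has length $\gtrsim r_2\gtrsim d(z)$, and on $L$ one has the explicit lower bound $G_{t/5}\geq G_{B(z,r_2)}\geq\log 2$. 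This gives $G_{t/5}(z',z)\gtrsim d(z)$ \emph{without} requiring $z$ to sit well inside a ball centered at a far point; it only needs the $r$-convexity ball to contain $z$, which is definitional. The passage from $z'$ to $\infty$ is then via the Harnack distance on the fixed domain $D=\Omega_{t_0}$, where $z'\in\Omega_r$ is relatively compact. To repair your proposal you would essentially have to prove the cone claim, whereas replacing the Poisson-at-center step with the paper's MVP-on-sphere step removes the need for it.
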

\begin{proof} By Theorem \ref{t1} $\O_t=\Th_t$ for $0<t\le t_0$.

Assume first that $d(z)>t_0/2$, so $z\in\O_{t_0/2}$. By properties $(2)$, $(3)$
$$ G_{t/5}(z,\infty)\ge G_{t_0/5}(z,\infty)\ge C>0, \qquad z\in\O_{t_0/2}. $$
Since $d(z)\le |z|+S\le (S+1)(|z|+1)$, $S=\max_{\z\in E} |\z|$, \eqref{10} follows.

For the rest of the proof we will assume $d(z)\le t_0/2$, and $z\in\O_{t}$, so $t<d(z)\le t_0/2$. By $r$-convexity $z\in B(z',r)\subset\O$,
and the following chain of inequalities can be easily checked
\begin{equation}\label{11}
r>|z-z'|\ge d(z')-d(z)\ge r-\frac{t_0}2>\frac{t_0}2\ge d(z)>t.
\end{equation}
Denote
$$ r_1:=|z-z'|-\frac{t}5, \qquad r_2:=d(z)-\frac{t}5, $$
so $4t/5<r_2<r_1<|z-z'|$. It follows from \eqref{11} that the disks $B(z',r_1)$ and $B(z,r_2)$ satisfy
\begin{enumerate}
  \item [(a)] $B(z',r_1)\cup B(z,r_2)\subset\O_{t/5}$;
  \item [(b)] $z'\notin B(z,r_2)$, \ \ $z\notin B(z',r_1)$;
  \item [(c)] Since
  $$ |z-z'|<r_1+\frac{r_2}4=|z-z'|-\frac{t}5+\frac{d(z)-t/5}4, $$ then $B(z',r_1)\cap B(z,r_2/4)\not=\emptyset$.
\end{enumerate}

Put $L:=\partial B(z',r_1)\cap B(z,r_2/2)$, the arc of the circle $\partial B(z',r_1)$ inside $B(z,r_2/2)$.
A simple argument from the plane geometry shows that property (c) implies the lower bound for the length of $L$: $|L|>r_2/2$.

\smallskip

We proceed with the bounds for the Green's functions. By properties (a) and (b) the function $G_{t/5}(\cdot,z)$ is harmonic and
positive in the disk $B(z',r_1)$. As $r_1<|z-z'|<r$, the Mean Value Theorem provides
$$ G_{t/5}(z',z)=\frac1{2\pi r_1}\,\int_{\partial B(z',r_1)} G_{t/5}(\z,z)\,m(d\z)\ge \frac1{2\pi r}\,\int_{L} G_{t/5}(\z,z)\,m(d\z). $$
Since $B(z,r_2)\subset\O_{t/5}$, and the Green's function increases with the domain, we have
$$ G_{t/5}(u,v)\ge G_{B(z,r_2)}(u,v), \qquad u,v\in B\left(z,r_2\right). $$
The latter can be computed explicitly
$$ G_{B(z,r_2)}(z,v)=\log\left|\frac{r_2}{v-z}\right|\ge\log 2, \qquad v\in B\left(z,\frac{r_2}2\right). $$
Hence $G_{t/5}(z,\z)\ge\log 2$ for $\z\in L$, so taking into account $r_2=d(z)-t/5>4d(z)/5$, we come to the lower bound
\begin{equation}\label{12}
G_{t/5}(z',z)\ge \frac{\log 2}{2\pi r}\,|L|>\frac{\log 2}{4\pi r}\,r_2>\frac{\log 2}{5\pi r}\,d(z).
\end{equation}

To pass from $z'$ to $\infty$, we invoke the Harnack distance.
Put $D=\O_{t_0}$, a domain in $\ovl\C$ which depends only on $E$, $D\subset\O_{t/5}$, and consider a function $h_{t,z}(\z):=G_{t/5}(z,\z)$.
It is clear that $z\not\in D$ (by the assumption $d(z)\le t_0/2$), so $h_{t,z}$ is positive and harmonic in $D$. Next, $t_0<r$ yields $\O_r$
is a relatively compact subset of $D$, so $z'\in D$, and by the definition of the Harnack distance
$$ \tau_D^{-1}(z',\infty)\,G_{t/5}(z,z')\le G_{t/5}(z,\infty). $$
By property (3) of the Harnack distance $\min_{z'\in\O_r}\tau_D^{-1}(z',\infty)=C>0$, and hence by \eqref{12}
$$ G_{t/5}(z,\infty)\ge C d(z)\ge C\frac{d(z)}{|z|+1}\,, \qquad z\in\O_t, $$
as claimed. The proof is complete.      \end{proof}

{\bf Remark 2}. Assume that $E$ is a non-polar $r$-convex compact set with connected complement. Then the Green's function
$G=G_0$ exists and unique, and it easily follows from Lemma \ref{l1} that
$$ G(z,\infty)\ge C\,\frac{d(z)}{|z|+1}\,, \quad z\in\O. $$

For the similar bounds for Green's functions of a bounded domain with $C^2$ boundary see \cite[formula (2.8)]{st}.

\section{Proof of the main result and its consequences}

We go over to subharmonic functions and their Riesz measures. Let $\cD$ be a domain of $\ovl\C$ such that its boundary
$\partial\cD$ is non-polar, and let $v$ be a subharmonic function on $\cD$, $v\equiv\kern-2ex/ -\infty$,
which has a harmonic majorant on $\cD$. Let $\mu=1/2\pi\, \De v$ be its Riesz measure.
By the fundamental Riesz decomposition theorem (RDT)
(cf., e.g., \cite[Theorem 4.5.4]{Ran})
\begin{equation*}
v(z)=u(z)-\int_{\cD}G(z,\z)\mu(d\z), \qquad z\in\cD,
\end{equation*}
$u$ is the least harmonic majorant on $\cD$, $G$ is the Green's function of $\cD$.

We apply this result for subharmonic functions on $\O=\ovl\C\backslash E$, $E$ is an $r$-convex compact set, with $\cD=\O_t$
for $t\le t_0$ from Lemma \ref{l1}, so its boundary is non-polar, and $G=G_t$.
As for the subharmonic on $\O$ function $v$, we assume that it is subject to the growth
and normalization conditions \eqref{31}, $\psi$ is a positive and monotone decreasing function on $\R_+$, $\psi\to +\infty$ as $t\to 0+$.
Hence $v$ has a harmonic majorant on $\cD$, and so
\begin{equation}\label{30}
v(z)=u_t(z)-\int_{\O_t}G_t(z,\z)\mu(d\z), \qquad z\in\O_t.
\end{equation}

{\it Proof of Theorem \ref{t2}}.
By \eqref{31} $v$ is bounded above on $\O_t$, and
\begin{equation}\label{313}
v(z)\le K_v \,\psi(t), \qquad z\in\O_t, \quad t>0, \end{equation}
so the least harmonic majorant $u_t$ does exists, with the same bound \eqref{313}. \eqref{30} for $z=\infty$ gives
\begin{equation}\label{321}
\int_{\O_t}G_t(\infty,\z)\mu(d\z)\le K_v \,\psi(t), \quad t>0.
\end{equation}

Next, write the left hand side of \eqref{312} as
$$ \int_\O \p(d(\z))\,\mu(d\z)=\int_\O \p(d(\z))\,\mu_1(d\z) + \int_{B^c(0,6S+1)} \p(d(\z))\,\mu(d\z)=I_1+I_2, $$
where $\mu_1$ is the restriction of $\mu$ to $B(0,6S+1)\backslash E$, so $\mu_1$ has a bounded support.

We begin with the bound for $I_1$. Put
$$ H_1(t,\mu_1):=\int_{\O_t}d(\z)\,\mu_1(d\z). $$
Since $d(\z)\le 6S+1+S=7S+1$ on the support of $\mu_1$, we have $H_1=0$ for $t\ge 7S+1$. We apply the so-called ``layer cake representation''
theorem (LCR) \cite[Theorem 1.13]{lilo}, which is a combination of the change of variables and the integration by parts, in the form
\begin{equation*}
\int_{\O} \p(d(\z))\,\mu_1(d\z)=\int_{\O} \p_1(d(\z))d(\z)\,\mu_1(d\z)  =\int_0^{7S+1} \p_1'(t)H_1(t,\mu_1)\,dt,
\end{equation*}
so
$$ I_1=\int_0^{7S+1} \p_1'(t)\,H_1(t,\mu_1)\,dt=\int_0^{t_0}\p_1'(t)\,H_1(t,\mu_1)\,dt+
\int_{t_0}^{7S+1}\p_1'(t)\,H_1(t,\mu_1)\,dt=I_{11}+I_{12}. $$
For $0<t\le t_0$ Lemma \ref{l1} combined with \eqref{321} gives
\begin{equation}\label{35}
H_1(t,\mu_1)\le C\int_{\O_t} G_{t/5}(\z,\infty)\,\mu_1(d\z)\le C\int_{\O_{t/5}} G_{t/5}(\z,\infty)\,\mu(d\z)
\le CK_v\,\psi\left(\frac{t}5\right),
\end{equation}
so
\begin{equation}\label{36}
I_{11}=\int_0^{t_0}\p_1'(t)\,H_1(t,\mu_1)\,dt\le CK_v\,\int_0^{t_0} \p_1'(t)\,\psi\left(\frac{t}5\right)dt.
\end{equation}
As for $I_{12}$, we have by \eqref{35}
$$ H_1(t,\mu_1)\le H_1(t_0,\mu_1)\le CK_v\,\psi\left(\frac{t_0}5\right), $$
and so
\begin{equation}\label{37}
I_{12}=\int_{t_0}^{7S+1}\p_1'(t)\,H_1(t,\mu_1)\,dt\le CK_v\,\psi\left(\frac{t_0}5\right)\,\int_{t_0}^{7S+1}\p_1'(t)dt=C K_v.
\end{equation}

The bound for $I_2$ is standard, and has nothing to do with the subtle Lemma \ref{l1}. Given $t\ge 5S+1$ we put
$$ R_t:=\frac23\,(t-S)\ge \frac23(4S+1), \qquad R_t-S=\frac{2t-5S}3\ge\frac{t}3, $$
and apply again the RDT in the form
$$ v(z)=\tilde u(z)-\int_{|\z|>R_t} \tilde G(z,\z)\,\mu(d\z), \qquad |z|>R_t, $$
$\tilde G$ is the Green's function of the domain $\{\z:|\z|>R_t\}$, $\tilde u$ the least harmonic majorant on this domain. Since
$d(z)\ge R_t-S$ for $|z|>R_t$, the assumptions on $v$ imply
$$ \tilde u(z)\le K_v\,\psi(R_t-S), \qquad |z|>R_t, $$
and so, as at the beginning of the proof,
\begin{equation}\label{37}
\int_{|\z|>R_t} \tilde G(\infty,\z)\,\mu(d\z)\le K_v\,\psi(R_t-S)\le K_v\,\psi\left(\frac{t}3\right).
\end{equation}
The function $\tilde G(\infty,\z)$ is known explicitly, $\tilde G(\infty,\z)=\log|\z|-\log|R_t|$, so
we conclude by \eqref{37}
$$ \log\frac32\,\int\limits_{|\z|>\frac32\,R_t}\,\mu(d\z)\le \int\limits_{|\z|>\frac32\,R_t} \log\left|\frac{\z}{R_t}\right|\,\mu(d\z)
\le \int\limits_{|\z|>R_t} \log\left|\frac{\z}{R_t}\right|\,\mu(d\z)\le K_v\,\psi\left(\frac{t}3\right). $$

Next, note that
$$ \left\{\z: |\z|>\frac32 R_t\right\}\supset \left\{\z: d(\z)>\frac32 R_t+S\right\}=\O_t, $$
so
\begin{equation}\label{0101}
 H(t,\mu):=\int_{\O_t}\mu(d\z)\le CK_v\,\psi\left(\frac{t}3\right)\,. \end{equation}
We apply the LCR theorem to $I_2$, having in mind $\{\z: |\z|>6S+1\}\subset \O_{5S+1}$, so by \eqref{0101}
\begin{equation}\label{38}
I_2\le \int_{\O_{5S+1}} \p(d(\z))\,\mu(d\z)=\int_{5S+1}^\infty \p'(t)\,H(t,\mu)dt\le CK_v\,
\int_{5S+1}^\infty \p'(t)\psi\left(\frac{t}3\right)\,dt.
\end{equation}
Theorem \ref{t2} now follows from \eqref{36}, \eqref{37} and \eqref{38}.
  \hfill$\Box$

\begin{corollary}\label{cor1}
Let for a subharmonic function $v$ $\eqref{31}$ holds with $\psi(t)=t^{-q}$, $q>0$. Then for each $\e>0$
\begin{equation}\label{coroll1}
\int_{\O} \p(d(\z))\,\mu(d\z)\le C(E,q,\e) K_v
\end{equation}
with
\begin{equation*}
\p(x)=x^{q+1/2}\,\left(\min\{x,1/x\}\right)^{\e+1/2}=
\left\{
  \begin{array}{ll}
    x^{q+1+\e}, & \hbox{$x\le1;$} \\
    x^{q-\e}, & \hbox{$x>1$.}
  \end{array}
\right.
\end{equation*}
\end{corollary}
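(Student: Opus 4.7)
The plan is to apply Theorem~\ref{t2} directly with $\psi(t)=t^{-q}$ and the specified $\p$, verifying the hypotheses in turn. A minor subtlety is that the stated $\p$ fails to be monotone increasing on $(1,\infty)$ when $\e\ge q$; I handle this by a pointwise domination argument at the end.

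Assume first that $0<\e<q$. On $[0,1]$ we have $\p(x)=x^{q+1+\e}$, hence $\p_1(x)=x^{q+\e}$ is monotone increasing, so the required condition on $\p_1$ near the origin holds. On $(1,\infty)$, $\p(x)=x^{q-\e}$ is monotone increasing since $q-\e>0$; the two branches match at $x=1$, so $\p$ is positive, monotone increasing and absolutely continuous on all of $\R_+$. To verify the integrability condition \eqref{311}, a direct computation gives
\begin{equation*}
\int_0^{1} \p_1'(t)\,\psi\lp\frac{t}{5}\rp dt=5^q(q+\e)\int_0^1 t^{\e-1}\,dt=\frac{5^q(q+\e)}{\e}<\infty,
\end{equation*}
\begin{equation*}
\int_1^\infty \p'(t)\,\psi\lp\frac{t}{3}\rp dt=3^q(q-\e)\int_1^\infty t^{-\e-1}\,dt=\frac{3^q(q-\e)}{\e}<\infty.
\end{equation*}
Theorem~\ref{t2} then delivers \eqref{coroll1} for every $\e\in(0,q)$.

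For $\e\ge q$, let $\p_\e$ denote the function in the statement. Fix any $\e_0\in(0,q)$. For $x\le 1$, $\e_0\le\e$ implies $x^{q+1+\e_0}\ge x^{q+1+\e}$, while for $x>1$, $\e_0\le\e$ implies $x^{q-\e_0}\ge x^{q-\e}$. Thus $\p_{\e_0}\ge\p_\e$ pointwise on $\R_+$. Integrating against $\mu$ and invoking the case already established yields \eqref{coroll1} for every $\e>0$. The argument has no real obstacle: the analytic content is fully encapsulated in Theorem~\ref{t2}, and what remains is the routine verification of its hypotheses together with the pointwise domination that reduces arbitrary $\e>0$ to $\e$ sufficiently small.
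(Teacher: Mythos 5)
Your proof is correct and follows the route the paper intends: Corollary~\ref{cor1} is stated without a separate proof as a direct specialization of Theorem~\ref{t2}, and the computations you give verifying \eqref{311} for $\psi(t)=t^{-q}$ with the stated $\p$ are exactly the intended check. The one thing you add that the paper glosses over is the case $\e\ge q$, where $\p$ is not monotone increasing on $(1,\infty)$ and so Theorem~\ref{t2} does not apply verbatim; your pointwise domination $\p_\e\le\p_{\e_0}$ with $\e_0\in(0,q)$ cleanly closes that gap, at the harmless cost of a constant depending on the chosen $\e_0$.
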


In some instances, in addition to the hypothesis of Theorem \ref{t2}, the support of the Riesz measure $\mu$ appears
to be bounded. Such situation occurs when $v=\log|f|$, $f$ is an analytic function on $\O$ with $f(\infty)=1$ (see Section 5).
Now only the first term in \eqref{311} matters, so we come to the following
\begin{corollary}\label{cor2}
In addition to the assumptions of Theorem $\ref{t2}$, let ${\rm supp}\,\mu\subset B(0,R_\mu)$, and, instead of \eqref{311},
$$ \int_0 \p_1'(t)\,\psi\left(\frac{t}5\right)\,dt<\infty. $$
Then
\begin{equation*}
\int_{\O} \p(d(\z))\,\mu(d\z)\le C(E,\psi,\p,R_\mu) K_v.
\end{equation*}
\end{corollary}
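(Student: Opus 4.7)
My plan is to mimic the proof of Theorem~\ref{t2}, but to dispense entirely with the outer contribution $I_2$, which is now absent because $\supp\mu\subset B(0,R_\mu)$. Since $d(\z)\le |\z|+S\le R_\mu+S$ on $\supp\mu$, the LCR theorem applied directly to the full measure $\mu$ gives
\begin{equation*}
\int_\O \p(d(\z))\,\mu(d\z)=\int_0^{R_\mu+S}\p_1'(t)\,H(t,\mu)\,dt, \qquad H(t,\mu):=\int_{\O_t}d(\z)\,\mu(d\z).
\end{equation*}
I would then split this integral at the threshold $t_0$ of Lemma~\ref{l1}, obtaining two pieces $I_{11}$ on $[0,t_0]$ and $I_{12}$ on $[t_0,R_\mu+S]$.

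For the inner piece $I_{11}$, I would reproduce verbatim the key step of Theorem~\ref{t2}: the RDT representation \eqref{30} evaluated at $z=\infty$, combined with Lemma~\ref{l1}, yields $H(t,\mu)\le CK_v\,\psi(t/5)$ for $0<t\le t_0$, so that
\begin{equation*}
I_{11}\le CK_v\int_0^{t_0}\p_1'(t)\,\psi\!\left(\frac{t}{5}\right)\,dt,
\end{equation*}
which is finite by the new hypothesis. For the outer piece $I_{12}$, the monotonicity $H(t,\mu)\le H(t_0,\mu)\le CK_v\,\psi(t_0/5)$ reduces the bound to
\begin{equation*}
I_{12}\le CK_v\,\psi\!\left(\frac{t_0}{5}\right)\,\bigl(\p_1(R_\mu+S)-\p_1(t_0)\bigr),
\end{equation*}
which is finite because $\p$, and hence $\p_1$ on any interval bounded away from zero, is absolutely continuous. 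Summing the two bounds produces a constant of the stated form $C(E,\psi,\p,R_\mu)\,K_v$.

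There is no genuine obstacle here: all the analytic work, namely the control of $H(t,\mu)$ near $t=0$ via Lemma~\ref{l1}, is already in place from Theorem~\ref{t2}. The role of the compact support hypothesis is precisely to replace the second integral in \eqref{311}, which governed the interplay of $\psi$ and $\p'$ at infinity, by the trivial observation that $\p_1'$ integrated over a bounded interval of $t$'s bounded away from zero is finite.
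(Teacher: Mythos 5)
Your proof is correct and is exactly the argument the paper intends (the paper does not write it out, but the remark ``Now only the first term in \eqref{311} matters'' points to precisely this simplification: dispense with $I_2$ and apply the LCR in the $\p_1$-form to the whole measure, which is now compactly supported). One small point of precision: the step ``reproduce verbatim'' is not quite literal, since when you pass from Lemma \ref{l1} to $H(t,\mu)\le CK_v\psi(t/5)$, the factor $|z|+1$ is bounded by $R_\mu+1$ rather than by $6S+2$ as in \eqref{35}, and this is precisely where the constant acquires its dependence on $R_\mu$; stating this explicitly would make the source of the extra parameter in $C(E,\psi,\p,R_\mu)$ transparent.
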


\medskip

Consider the case of finite sets $E$, where the bound for the Green's function in \eqref{10} and the main result can be refined.
We formulate it for the special bound as in
Corollary \ref{cor1}, although the general case of Theorem \ref{t2} can be handled in the same fashion.

\begin{theorem}\label{t3}
Let $E=\{\z_1,\ldots,\z_N\}$ be a finite set, $v$ be a subharmonic function on $\O=\ovl\C\backslash E$ so that
\begin{equation}\label{381}
v(z)\le\frac{K_v}{d^q(z)}\,, \qquad q>0, \quad z\in\O,
\end{equation}
and $v(\infty)=0$. Then there is $k=k(E)>1$ such that
\begin{equation}\label{393}
G_t(z,\infty)>\frac{\log 2}{N}>0, \qquad z\in\O_{kt},
\end{equation}
and for each $\e>0$
\begin{equation*}
 \int_{\O} \p(d(\z))\,\mu(d\z)\le C(E,q,\e) K_v, \qquad
\p(x)=
\left\{
  \begin{array}{ll}
    x^{q+\e}, & \hbox{$x\le1;$} \\
    x^{q-\e}, & \hbox{$x>1$.}
  \end{array}
\right.
\end{equation*}
If, in addition, ${\rm supp}\,\mu$ is bounded then
\begin{equation}\label{382}
\int_{\O} d^{q+\e}(\z)\,\mu(d\z)<\infty.
\end{equation}
\end{theorem}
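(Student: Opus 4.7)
The decisive improvement over Theorem \ref{t2} for finite $E$ is the uniform Green's function bound \eqref{393}. To prove it, I would compare $G_t(\cdot,\infty)$ with the harmonic potential
\[ v(z) := \frac1N\sum_{j=1}^N \log|z-\zeta_j|, \qquad z\in\Omega. \]
Since $v(z)=\log|z|+O(|z|^{-1})$ at $\infty$, the difference $v-G_t(\cdot,\infty)$ extends to a bounded harmonic function on $\Omega_t$ (the logarithmic poles at $\infty$ cancel). For $0<t\le\delta(E)/2$, where $\delta(E):=\min_{i\ne j}|\zeta_i-\zeta_j|$, the closed disks $\overline{B(\zeta_j,t)}$ are pairwise disjoint, so on each component $\partial B(\zeta_{j_0},t)$ of $\partial\Omega_t$ one has $|z-\zeta_{j_0}|=t$ and $|z-\zeta_i|\le 2\,\mathrm{diam}(E)$ for $i\ne j_0$; hence $\max_{\partial\Omega_t}v\le \frac{\log t}{N}+C_1(E)$. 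Since $G_t=0$ on $\partial\Omega_t$, the maximum principle yields $G_t(z,\infty)\ge v(z)-\max_{\partial\Omega_t}v$. Combining with the elementary lower bound $v(z)\ge\frac1N\log d(z)+\frac{N-1}{N}\log(\delta(E)/2)$ (which holds throughout $\Omega$: for any $i$ not realizing $d(z)$ one has $|z-\zeta_i|\ge\delta(E)/2$) produces
\[ G_t(z,\infty)\ge \frac1N\log\frac{d(z)}{t}-C(E). \]
Then \eqref{393} follows by choosing $k=k(E)$ so that $\frac{\log k}{N}-C(E)>\frac{\log 2}{N}$, e.g.\ $k(E)>2\bigl(4\,\mathrm{diam}(E)/\delta(E)\bigr)^{N-1}$.

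With \eqref{393} established, the integral estimate becomes a simplified version of the proof of Theorem \ref{t2}. The Riesz decomposition on $\Omega_t$ evaluated at $\infty$ still yields $\int_{\Omega_t}G_t(\zeta,\infty)\,\mu(d\zeta)\le K_v\psi(t)=K_v t^{-q}$, and \eqref{393} converts this immediately into
\[ H(s,\mu):=\mu(\Omega_s)\le C(E,q)\,K_v\,s^{-q} \]
for $0<s\le k\,t_0(E)$ (substitute $s=kt$). A direct layer-cake representation $\int_\Omega \varphi(d(\zeta))\,\mu(d\zeta)=\int_0^\infty \varphi'(t)\,H(t,\mu)\,dt$ then handles the part of the integral where $\zeta$ is near $E$: for $\varphi(x)=x^{q+\varepsilon}$ on $x\le 1$ the integrand is at most $(q+\varepsilon)\,C K_v\,t^{\varepsilon-1}$, integrable at zero. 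The far-from-$E$ contribution is handled verbatim as in the $I_2$ estimate in the proof of Theorem \ref{t2}: a Riesz decomposition in the exterior $\{|z|>R_t\}$ produces $\mu(\{|z|>t\})\le CK_v t^{-q}$ for large $t$, then integrated against $\varphi'(t)\asymp t^{q-\varepsilon-1}$ on $t\ge 1$ to give a finite total.

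When $\mathrm{supp}\,\mu$ is bounded the $I_2$-type argument is unnecessary and $H(t,\mu)=0$ for $t$ past the diameter of the support, so \eqref{382} is immediate from $\int d^{q+\varepsilon}\,\mu = (q+\varepsilon)\int_0^\infty t^{q+\varepsilon-1}H(t,\mu)\,dt$ together with the bound $H(t,\mu)\le CK_v t^{-q}$ near zero. The main obstacle is the refined Green's function bound \eqref{393}; the rest is routine given the machinery of Theorem \ref{t2}. The essential point is the identification of the correct comparison function $v=\frac1N\sum_j\log|z-\zeta_j|$: its leading behavior $\log|z|$ at infinity matches that of $G_t(\cdot,\infty)$, and its boundary fluctuation of order $\frac1N\log(1/t)$ is precisely what cancels against the lower bound $\frac1N\log d(z)$ to yield the $t$-independent constant $\log 2/N$ on $\Omega_{kt}$.
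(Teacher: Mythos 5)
Your proof is correct and follows essentially the same route as the paper: both arguments compare $G_t(\cdot,\infty)$ with the harmonic function $\frac1N\sum_j\log|z-\zeta_j|$, bound it from above on $\partial\Omega_t$ and from below via $|z-\zeta_i|\ge\delta(E)/2$ for non-minimizing indices, and then invoke the maximum principle (the paper phrases it as Phragm\'en--Lindel\"of after normalizing the comparison function $v_t$ so that it is $\le 0$ on $\partial\Omega_t$, which is merely a cosmetic difference). The reduction of the integral estimate to the layer-cake with $H(s,\mu)=\mu(\Omega_s)$, the near/far split, and the bounded-support simplification all match the paper's sketch as well.
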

\begin{proof}
Put
\begin{equation}\label{39}
m_j:=\prod_{i\not=j}|\z_i-\z_j|, \qquad C:=2^{N-1}\,\max_j m_j.
\end{equation}
The function
\begin{equation}\label{391}
v_t(z):=\frac1{N}\left(\sum_{j=1}^N\log|z-\z_j|-\log t-\log C\right) \end{equation}
is subharmonic on $\C$ (and harmonic on $\O$), and $v_t(z)=\log|z|+O(1)$, as $z\to\infty$. For $t\le t_1$ in \eqref{f1}, on each
circle $|z-\z_n|=t$, $n=1,2,\ldots,N$ one has
$$ v_t(z)=\frac1{N}\left(\sum_{j\not=n}\log|z-\z_j|-\log C\right), $$
and since $|z-\z_j|\le |z-\z_n|+|\z_n-\z_j|=t+|\z_n-\z_j|$, then
$$ v_t(z)\le \frac1{N}\left(\sum_{j\not=n}\log(|\z_j-\z_n|+t)-\log C\right)\le \frac1{N}\left((N-1)\log 2+
\sum_{j\not=n}\log|\z_n-\z_j|-\log C\right)\le0 $$
in view of the choice of $C$. Hence $u_t(z)=v_t(z)-G_t(z,\infty)$ is subharmonic on $\O_t$,
$$ \limsup_{z\to\z}u_t(z)\le0, \quad \z\in\partial\O_t, \quad \limsup_{z\to\infty}\frac{u_t(z)}{\log|z|}=0, $$
so by the Phragmen--Lindel\"of principle \cite[Corollary 2.3.3]{Ran} $u_t\le0$, or
\begin{equation}\label{392}
v_t(z)\le G_t(z,\infty), \qquad z\in\O_t.
\end{equation}

On the other hand, put
$$ k=k(E):=1+2C\,\left(\frac2{\d(E)}\right)^{N-1}>1, $$
$\d(E)$ is defined in \eqref{f1}, and assume that $t\le t_2(E):=k^{-1}t_1$. For $z\in\O_{k t}$ we have
\begin{equation*}
\min_i|z-\z_i| =|z-\z_l|>kt, \qquad \min_{i\not=l}|z-\z_i|\ge \d(E)-kt,
\end{equation*}
so
\begin{equation*}
\begin{split}
v_t(z) &=\frac1{N}\left(\sum_{j\not=l}\log|z-\z_j|+\log|z-\z_l|-\log t-\log C\right) \\ &>\frac1{N}\left((N-1)\log(\d(E)-kt)
+\log kt-\log t-\log C\right) \\
&\ge \frac1{N}\left( (N-1)\log\frac{\d(E)}2+\log k-\log C\right)>\frac{\log 2}{N}\,,
\end{split}
\end{equation*}
by the choice of $k$ and $C$. Finally,
\begin{equation*}
G_t(z,\infty)\ge v_t(z)>\frac{\log 2}{N}>0, \qquad z\in\O_{kt},
\end{equation*}
as needed.

The rest of the proof goes along the same line of reasoning as one in Theorem \ref{t2}, by using the ``layer cake representation'',
with Lemma \ref{l1} replaced with \eqref{393}.
\end{proof}

\medskip

To show that Corollary 1 and Theorem \ref{t3} are optimal in a sense, we proceed with the following simple result.

\begin{lemma}\label{le2}
Let $E$ be an arbitrary compact set, which does not split the plane, $D$ be a relatively compact
$($in the sense of\ $\ovl\C)$ subdomain of $\O=\ovl\C\backslash E$, and $\infty\in D$.
Let $v$ be a subharmonic and continuous $($in the sense of\ $\ovl\C)$, nonnegative function on
$\O$. Then the least harmonic majorant $u$ for $D$ exists, and
\begin{equation}\label{opt1}
v_{min}:=\min_{\z\in\partial D} v(\z)\le u(z)\le \max_{\z\in\partial D} v(\z)=:v_{max}, \quad z\in D.
\end{equation}
\end{lemma}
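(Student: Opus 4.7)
My plan is to first extract the geometric content of the hypothesis and then produce the majorant in two stages. Since $D$ is a relatively compact (in the sense of $\ovl\C$) subdomain of $\O$, the closure $\ovl D$ taken in $\ovl\C$ is a compact subset of $\O$; in particular $\partial D$ is a compact subset of $\O$, the continuous function $v$ is bounded on it, and the quantities $v_{min}$, $v_{max}$ are finite and nonnegative. Because $v$ is subharmonic on the open set $\O\supset\ovl D$ and continuous on the compact set $\ovl D$, the maximum principle on the Riemann sphere (with $\infty\in D$ treated as an interior point through the spherical chart) yields $v(z)\le v_{max}$ for all $z\in D$.

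Next I would produce the least harmonic majorant. The previous step shows that the constant $v_{max}$ is a harmonic majorant of $v$ on $D$, so the family $\cH$ of harmonic majorants of $v$ is nonempty. The standard Perron-type construction $u:=\inf_{h\in\cH}h$ then furnishes the least harmonic majorant: harmonicity of $u$ follows locally by applying Harnack's convergence theorem to a decreasing sequence in $\cH$ realizing the infimum on a countable dense subset of $D$. By construction $v\le u\le v_{max}$, the right-hand inequality being immediate from $v_{max}\in\cH$ and the minimality of $u$.

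For the left-hand inequality I would invoke the minimum principle. Since $u\ge v$ on $D$ and $v$ is continuous on $\ovl D$,
$$\liminf_{z\to\z,\ z\in D}u(z)\ge v(\z)\ge v_{min}, \qquad \z\in\partial D.$$
The function $u-v_{min}$ is harmonic on $D$ and bounded (since $v_{min}\le v\le u\le v_{max}$), with nonnegative boundary liminf at every point of $\partial D$, so the minimum principle forces $u\ge v_{min}$ throughout $D$.

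The main point of care is that $D$ contains $\infty$ and that $\partial D$ need not be non-polar, so the Green-function/Riesz-decomposition machinery of Section 3 is not directly available here. This is handled by the fact that $u$ is a priori bounded and $v$ is continuous up to the compact boundary $\partial D\subset\O$, which is precisely what is needed for the Riemann-sphere versions of the maximum and minimum principles to apply without any regularity assumption on $\partial D$; no construction of $G_D$ is required.
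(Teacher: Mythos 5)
Your proof is correct and follows essentially the same route as the paper's: the upper bound comes from the maximum principle applied to the subharmonic $v$ on $\ovl D$ (which also yields a harmonic majorant, hence existence of $u$), and the lower bound from the extended maximum principle applied to the bounded harmonic function $v_{\min}-u$ using $\liminf_{z\to\z}u(z)\ge v(\z)$ on $\partial D$. One small correction to your closing remark: $\partial D$ is in fact automatically non-polar here, since $\ovl D\subset\O$ is compactly contained and thus $\partial D$ separates $D$ from a neighbourhood of $E$ in $\ovl\C$, and it is precisely this non-polarity (not Dirichlet regularity, which you rightly note is unnecessary) that licenses the extended maximum principle you use for the lower bound.
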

\begin{proof} By the assumption, $v$ is nonnegative and bounded on $D$, so the least harmonic majorant exists, and
it is nonnegative and bounded.

To prove the right inequality, note that $v$ is continuous on $\ovl D$, and so
$$ \limsup_{z\to\z}v(z)=v(\z)\le v_{max}. $$
By the Maximum Principle $v\le v_{max}$, so $u\le v_{max}$.

To prove the left inequality, note that
$$ \liminf_{z\to\z}u(z)\ge \liminf_{z\to\z}v(z)=v(\z)\ge v_{min}. $$
Put $V=-u+v_{min}$, the harmonic and bounded function on $D$, and
$\limsup_{z\to\z}V(z)\le 0$, $\z\in\partial D$.
Again, by the Maximum Principle, $V\le 0$ in $D$, as needed.
The proof is complete.
\end{proof}

For the class of subharmonic functions $v$ $\eqref{31}$ with $\psi(t)=t^{-q}$, $q>0$, there is an obvious extremal element
$\hat v(z)=d^{-q}(z)$. This function is subharmonic and continuous on $\O$, and it is quite natural to expect that it provides
certain opposite results (divergence of integrals in \eqref{coroll1}).

Let us apply Lemma \ref{le2} to $\hat v$. By the RDT
$$ 0=\hat v(\infty)=\hat u(\infty)-\int_D G_D(z,\infty)\,\hat\mu(dz), \quad \hat\mu=\frac1{2\pi}\Delta \hat v,$$
and so by Lemma \ref{le2}
\begin{equation}\label{opt2}
[\max_{\z\in\partial D} d(\z)]^{-q}\le \int_D G_D(z,\infty)\,\hat\mu(dz)\le [\min_{\z\in\partial D} d(\z)]^{-q}\,.
\end{equation}

Two types of domains $D$ are of particular interest.

\noindent
1. Let, as above in Section 2, $\Theta_t$ be the unbounded component of the set $\O_t=\{z:\ d(z)>~t\}$. Then $d(\z)=t$ on
$\partial \Theta_t$, so by \eqref{opt2}
\begin{equation}\label{opt3}
\int_{\Theta_t} G_{\Theta_t}(z,\infty)\,\hat\mu(dz)=t^{-q}\,.
\end{equation}

\noindent
2. Let $D=D_t=\{|z|>t\}$, $t>S=\max_{\z\in E} |\z|$. Then for $|z|\ge t$
\begin{equation}\label{opt4}
\frac{t-S}{t}\,|z|\le d(z)\le |z|+S,
\end{equation}
$G_{D_t}(z,\infty)=\log\frac{|z|}t$, and \eqref{opt2} takes the form
\begin{equation}\label{opt41}
(t+S)^{-q}\le \int_{D_t} \log\frac{|z|}t\,\hat\mu(dz)\le (t-S)^{-q}\,.
\end{equation}

Let us mention two important consequences of \eqref{opt41}. First, let $t>\tau>S$, then
\begin{equation}\label{opt5}
\begin{split}
\int_{D_t} \hat\mu(dz) &\le \left(\log\frac{t}{\tau}\right)^{-1}\,\int_{D_t} \log\frac{|z|}{\tau}\,\hat\mu(dz)\le
\left(\log\frac{t}{\tau}\right)^{-1}\,\int_{D_\tau} \log\frac{|z|}{\tau}\,\hat\mu(dz) \\
&\le \left(\log\frac{t}{\tau}\right)^{-1}(\tau-S)^{-q}<\infty.
\end{split}
\end{equation}
Next,
\begin{equation}\label{opt6}
\int_{D_t} \log|z|\,\hat\mu(dz)\le (t-S)^{-q}+\log t\,\int_{D_t} \hat\mu(dz)<\infty.
\end{equation}

We show now that Corollary 1 is false for the function $\hat v$ and $\e<0$.

\begin{theorem}\label{thopt1}
Let $E$ be an arbitrary compact set, which does not split the plane, $\hat v(z)=d^{-q}(z)$, $q>0$.
Then for each $\e>0$
\begin{equation}\label{opt7}
I_{\pm}:=\int_{\O} d^{q\pm\,\e}(z)\,\hat\mu(dz)=+\infty.
\end{equation}
\end{theorem}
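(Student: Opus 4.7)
My plan is to prove $I_+=+\infty$ and $I_-=+\infty$ separately, leveraging the extremality identities \eqref{opt3} and \eqref{opt41} together with the finiteness facts \eqref{opt5} and \eqref{opt6}. The single auxiliary estimate that drives the whole argument is a quantitative lower bound $\hat\mu(\O_t)\ge c\,t^{-q}/\log(1/t)$ valid for all small $t>0$. To obtain it, I fix any $\z_0\in E$ and observe that $\Th_t\cap\ovl{B(\z_0,t)}=\emptyset$ (points of this closed disk have $d\le t$), so by the monotonicity property (3) of Green's functions $G_{\Th_t}(z,\infty)\le G_{\C\setminus\ovl{B(\z_0,t)}}(z,\infty)=\log\lp|z-\z_0|/t\rp\le \log(|z|+S)+\log(1/t)$ for every $z\in\Th_t$. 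I then split the identity $\int_{\Th_t}G_{\Th_t}(z,\infty)\,\hat\mu(dz)=t^{-q}$ at some fixed $R>S$: the piece over $\Th_t\cap D_R$ is controlled by $\hat\mu(D_R)$ and $\int_{D_R}\log|z|\,\hat\mu(dz)$ (both finite by \eqref{opt5} and \eqref{opt6}) and so contributes only $O(\log(1/t))$, forcing the piece over $\Th_t\cap B(0,R)$ to retain at least $t^{-q}/2$ for small $t$. Dividing by the uniform upper bound $C+\log(1/t)$ there yields the claimed lower bound for $\hat\mu(\O_t)\ge\hat\mu(\Th_t)$.

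For $I_+$ I would argue by contradiction. If $I_+<\infty$, then $M(t):=\int_{D_t}d^{q+\e}(z)\,\hat\mu(dz)\to 0$ as $t\to\infty$. For $|z|>t>2S$ one has $d(z)\ge|z|/2$, and a one-variable optimization shows $\log(r/t)/r^{q+\e}\le C\,t^{-(q+\e)}$ on $\{r\ge t\}$. Combining these gives $\int_{D_t}\log(|z|/t)\,\hat\mu(dz)\le C'\,t^{-(q+\e)}M(t)$, which, placed against the lower bound $(t+S)^{-q}\ge 2^{-q}t^{-q}$ from \eqref{opt41}, forces $M(t)\ge c\,t^{\e}$ and contradicts $M(t)\to 0$.

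For $I_-$ I would split according to the sign of $p:=q-\e$. If $0<p<q$, the layer-cake formula writes $I_-=p\int_0^\infty t^{p-1}\hat\mu(\O_t)\,dt$, and the lower bound from the first paragraph makes the integrand near the origin at least $\sim t^{-\e-1}/\log(1/t)$, which is not integrable at $0$ (the substitution $u=-\log t$ turns it into $\int^\infty e^{\e u}/u\,du=+\infty$). If $p=0$, then $I_-=\hat\mu(\O)=\lim_{t\to 0^+}\hat\mu(\O_t)=+\infty$. If $p<0$, then $d^p>1$ on $\{d<1\}$, so $I_-\ge\hat\mu(\{d<1\})=\hat\mu(\O)-\hat\mu(\{d\ge 1\})$; the subtracted term is finite, since $\hat\mu$ is Radon on $\O$ (hence finite on the compact set $\{d\ge 1\}\cap\ovl{B(0,R)}\subset\O$) and its tail $\hat\mu(\{d\ge 1\}\cap D_R)\le\hat\mu(D_R)$ is controlled by \eqref{opt5}, while the first term $\hat\mu(\O)=+\infty$ as above.

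The main technical obstacle is the upper bound on $G_{\Th_t}(z,\infty)$ that has to be made uniform as $t\to 0$, which is delicate when $E$ itself is polar and the Green's function for $\O$ does not exist at all. The reduction by monotonicity to the explicit Green's function $\log(|z-\z_0|/t)$ of the complement of a single small disk centered at a point of $E$ sidesteps this issue cleanly, at the cost of a logarithmic factor $\log(1/t)$ that the layer-cake step is still able to absorb.
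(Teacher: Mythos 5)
Your proof is correct, and it draws on the same raw ingredients as the paper's argument --- the extremality identity \eqref{opt3}, the Green's-function upper bound $G_{\Theta_t}(z,\infty)\le\log(|z-\z_0|/t)$ (which you get from monotonicity of Green's functions rather than from the maximum principle as in \eqref{opt9}, but the content is identical), the two-sided estimate \eqref{opt41}, and the finiteness facts \eqref{opt5}--\eqref{opt6} --- yet the route through them is genuinely different. For $I_+$ you argue by contradiction: if $I_+<\infty$ then the tail $M(t)=\int_{D_t}d^{q+\e}(z)\,\hat\mu(dz)\to0$, while the elementary bound $\log(r/t)\le C\,t^{-(q+\e)}r^{q+\e}$ on $r\ge t$ together with $d(z)\ge|z|/2$ for $|z|>2S$ gives $\int_{D_t}\log(|z|/t)\,\hat\mu(dz)\le C'\,t^{-(q+\e)}M(t)$, which against the lower half of \eqref{opt41} forces $M(t)\gtrsim t^{\e}$, a contradiction. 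The paper instead argues directly, minorizing $d^{q+\e}(z)$ by $C\,|z|^{q}\log|z|$ on $D_{S+1}$ and running a second layer-cake computation. For $I_-$ you first extract a single $\e$-free lemma $\hat\mu(\O_t)\ge c\,t^{-q}/\log(1/t)$ for small $t$ by splitting \eqref{opt3} at a fixed $R>S$ and using the uniform bound $G_{\Theta_t}(z,\infty)\le\log(R+S)+\log(1/t)$ on $\Theta_t\cap B(0,R)$; you then finish by layer cake plus the substitution $u=\log(1/t)$, with a separate (easy) discussion of the cases $q-\e\le0$. The paper proves a weaker $\e$-dependent bound $\hat\mu(N_x)\ge C\,x^{\e-q}$, obtained by further crushing the logarithmic Green's-function bound to $C_3\,x^{-\e}$ as in \eqref{opt10}, so that the layer-cake integrand becomes exactly $x^{-1}$. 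Your $I_-$ lemma is sharper and uniform in $\e$, at the mild cost of absorbing a $\log(1/t)$ factor in the final divergence; the paper's version is tuned so the final integral is transparently divergent, but it implicitly assumes $q-\e>0$, a case restriction your argument removes.
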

\begin{proof} Put $M:=B(0,S+1)\backslash E=B(0,S+1)\bigcap\O$. We actually prove that
$$ \int_{D_{S+1}} d^{q+\,\e}(z)\,\hat\mu(dz)=\int_{M} d^{q-\,\e}(z)\,\hat\mu(dz)=+\infty, \qquad D_{S+1}=\{|z|>S+1\}.
$$
Let us begin with $I_+$. By \eqref{opt4} with $t=S+1$ we have for $|z|\ge S+1$
$$ d^{q+\,\e}(z)\ge \frac{|z|^{q+\,\e}}{(S+1)^{q+\,\e}}\ge C_1(E,q,\e)\,|z|^q\log|z|, $$
so that
\begin{equation}\label{opt8}
\int_{D_{S+1}} d^{q+\,\e}(z)\,\hat\mu(dz)\ge C_1(E,q,\e)\int_{D_{S+1}} |z|^q\log|z|\,\hat\mu(dz).
\end{equation}
Let $\s(dz)=\log|z|\,\hat\mu(dz)$ restricted to $D_{S+1}$. The LCR theorem gives
\begin{equation*}
\begin{split}
\int_{D_{S+1}} |z|^q\,\s(dz) &= q\int_0^\infty t^{q-1}\,dt\,\int_{D_{t}} \log|z|\,\hat\mu(dz) \\
&= (S+1)^q\,\int_{D_{S+1}} \log|z|\,\hat\mu(dz)+q\int_{S+1}^\infty t^{q-1}\,dt\,\int_{D_{t}} \log|z|\,\hat\mu(dz),
\end{split}
\end{equation*}
so by \eqref{opt8}
\begin{equation*}
\int_{D_{S+1}} d^{q+\,\e}(z)\,\hat\mu(dz)\ge C_2(E,q,\e)\,\int_{S+1}^\infty t^{q-1}\,dt\,\int_{D_{t}} \log|z|\,\hat\mu(dz).
\end{equation*}
But $G_{D_t}(z,\infty)=\log|z|-\log t<\log|z|$, and it follows from \eqref{opt41} that
\begin{equation*}
\int_{D_{t}} \log|z|\,\hat\mu(dz)\ge \int_{D_{t}} G_{D_t}(z,\infty)\,\hat\mu(dz)\ge (t+S)^{-q},
\end{equation*}
which implies
$$ I_+\ge \int_{D_{S+1}} d^{q+\,\e}(z)\,\hat\mu(dz)=+\infty, $$
as claimed.

The domain $\Theta_x$ plays a key role in estimating $I_-$. Let $z\in\Theta_x$, then for every $z_0\in E$ one has
$$ |z-z_0|\ge d(z)>x, \qquad \frac{|z-z_0|}{x}>1, $$
so the function $h(z)=\log\frac{|z-z_0|}{x}$ is harmonic on $\Theta_x$,
$$ h(z)\ge0, \quad z\in \ovl{\Theta}_x; \qquad h(z)=\log|z|+O(1), \quad z\to\infty. $$
Hence by the Maximum Principle
\begin{equation}\label{opt9}
\log\frac{|z-z_0|}{x}-G_{\Theta_x}(z,\infty)\ge 0, \quad z\in\Theta_x.
\end{equation}

Denote $M_x:=B(0,S+1)\bigcap\O_x$, $N_x:=B(0,S+1)\bigcap\Theta_x\subset M_x$. If $z\in N_x$ and $x<1$, then \eqref{opt9} implies
\begin{equation}\label{opt10}
G_{\Theta_x}(z,\infty)<\log\frac{2S+1}x<C_3(E,\e)\,x^{-\e}.
\end{equation}
We apply again LCR theorem to obtain
$$
\int_{M} d^{q-\,\e}(z)\,\hat\mu(dz) = (q-\e)\int_0^1 x^{q-\e-1}dx\,\int_{M_x}\hat\mu(dz)
\ge (q-\e)\int_0^1 x^{q-\e-1}dx\,\int_{N_x}\hat\mu(dz). $$
By \eqref{opt10}
$$
\int_{M} d^{q-\,\e}(z)\,\hat\mu(dz)
\ge C_4(E,q,\e)\,\int_0^1 x^{q-1}dx\,\int_{N_x} G_{\Theta_x}(z,\infty)\,\hat\mu(dz). $$
Next, obviously $N_x=\Theta_x\backslash{\ovl{D}_{S+1}}=\Theta_x\backslash(\Theta_x\cap\ovl{D}_{S+1})$, and so
$$ \int_{N_x} G_{\Theta_x}(z,\infty)\,\hat\mu(dz)=\int_{\Theta_x} G_{\Theta_x}(z,\infty)\,\hat\mu(dz)-
\int_{\Theta_x\cap\ovl{D}_{S+1}} G_{\Theta_x}(z,\infty)\,\hat\mu(dz). $$
The first integral in the right hand side is $x^{-q}$ due to \eqref{opt3}. As for the second one, we have
by \eqref{opt9}, \eqref{opt5} and \eqref{opt6}
\begin{equation*}
\begin{split}
\int_{\Theta_x\cap\ovl{D}_{S+1}} G_{\Theta_x}(z,\infty)\,\hat\mu(dz)
&\le \int_{\Theta_x\cap\ovl{D}_{S+1}} \log\frac{|z-z_0|}x\,\hat\mu(dz)\le \int_{\ovl{D}_{S+1}} \log\frac{2|z|}x\,\hat\mu(dz) \\
&= \int_{\ovl{D}_{S+1}} \log|z|\,\hat\mu(dz)+\log\frac2x\,\int_{\ovl{D}_{S+1}}\,\hat\mu(dz)\le C_5(E)\left(1+\log\frac2x\right).
\end{split}
\end{equation*}
Finally,
$$ \int_{N_x} G_{\Theta_x}(z,\infty)\,\hat\mu(dz)\ge x^{-q}-C_5(E)\left(1+\log\frac2x\right)\ge C_6(E)x^{-q} $$
for small enough $x$, and so
$$ I_-\ge \int_{M} d^{q-\,\e}(z)\,\hat\mu(dz)=+\infty. $$
The proof is complete.
\end{proof}

We can write \eqref{opt7} as (compare with Corollary 1)
\begin{equation*}
\int_{\O} \hat\p(d(z))\,\hat\mu(dz)=+\infty, \qquad \hat\p(x)=
\left\{
  \begin{array}{ll}
    x^{q-\,\e}, & \hbox{$x\le1;$} \\
    x^{q+\,\e}, & \hbox{$x>1$.}
  \end{array}
\right.
\end{equation*}

It turns out that for particular sets $E$ and the function $\hat v$ Corollary 1 is false even for $\e=0$.

{\bf Example}. Let $E_0=[0,1]$, $v_0(z)=d^{-2}(z, E_0)$, $\mu_0=\frac1{2\pi}\Delta v_0$. By Corollary 1
\begin{equation*}
\int_{\O_0} \p_0(d(\z))\,\mu_0(d\z)<\infty, \qquad
\p_0(x)=
\left\{
  \begin{array}{ll}
    x^{3+\e}, & \hbox{$x\le1$,} \\
    x^{2-\e}, & \hbox{$x>1$,}
  \end{array}
\right. \quad \forall\e>0.
\end{equation*}
We can compute the Riesz measure explicitly. Indeed,
now $\C=\C_1\cup\C_2\cup\C_3$, where
$$ \C_1=\{z: 0\le x\le 1, \ y\not=0\}, \quad \C_2=\{z: x<0\}, \quad \C_3=\{z: x>1\}, \quad z=x+iy. $$
We apply the well-known equality $\Delta|F|^2=4|F'|^2$, $F$ is an analytic function, so
\begin{equation*}
v_0(z)=\left\{
         \begin{array}{ll}
           y^{-2}, & \hbox{$z\in\C_1$,} \\
           |z|^{-2}, & \hbox{$z\in\C_2$,} \\
           |z-1|^{-2}, & \hbox{$z\in\C_3$,}
         \end{array}
       \right.
\qquad
\Delta v_0=\left\{
             \begin{array}{ll}
               6y^{-4}, & \hbox{$z\in\C_1$,} \\
               4|z|^{-4}, & \hbox{$z\in\C_2$,} \\
               4|z-1|^{-4}, & \hbox{$z\in\C_3$.}
             \end{array}
           \right.
\end{equation*}
We have for $p>0$
$$ \int_{\O_0} d^p(z)\,\mu_0(dz)=\sum_{j=1}^3 \int_{\C_j} d^p(z)\,\mu_0(dz). $$
The first integral
$$ I_1:= \int_{\C_1} d^p(z)\,\mu_0(dz)=12\int_0^1 dx\int_0^\infty \frac{dy}{y^{4-p}}=+\infty $$
for $p=3$. The second one
$$ I_2:= \int_{\C_2} d^p(z)\,\mu_0(dz)=8\int_{-\infty}^0 dx\int_0^\infty \frac{dy}{(x^2+y^2)^{2-p/2}}=+\infty $$
for $p=2$. The computation for $I_3$ is similar.

\medskip

We complete the section with the converse result for analytic functions (cf. \cite{fg12}).

\begin{proposition}\label{p3}
Let $E$ be a compact subset of $\C$, $Z=\{z_n\}$ a sequence of points in $\O=\C\backslash E$ so that
$$ K:=\sum_{n\ge1} d^q(z_n)<\infty, \qquad q\ge1. $$
Then there is an analytic on $\O$ function $f$ with the zero set $Z(f)=Z$, $f(\infty)=1$, such that
\begin{equation}\label{394}
\log|f(z)|\le \frac{C_q K}{d^q(z)}.
\end{equation}
\end{proposition}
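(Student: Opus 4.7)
\smallskip

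\noindent\textbf{Proof plan.} I would construct $f$ as an infinite product of Weierstrass-type canonical factors, each anchored at a nearest point of $E$. For every $n$ select $\z_n\in E$ with $|z_n-\z_n|=d(z_n)$, write $\l_n:=z_n-\z_n$ and $w_n(z):=\l_n/(z-\z_n)$, and let $k:=\lceil q\rceil$. Define
$$ b_n(z):=E_{k-1}(w_n(z)), \qquad E_p(w):=(1-w)\exp\Bigl(\sum_{j=1}^p\frac{w^j}{j}\Bigr), $$
with the convention $E_0(w)=1-w$. Each $b_n$ is meromorphic on $\ovl\C$ with a unique pole at $\z_n\in E$ and a unique simple zero at $z_n$ (since $E_{k-1}(w)=0$ iff $w=1$); in particular $b_n$ is analytic on $\O$ and $b_n(\infty)=E_{k-1}(0)=1$.

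Convergence of $f(z):=\prod_n b_n(z)$ on compacta of $\O\backslash Z$ follows from the standard bound $|\log E_p(w)|\le 2|w|^{p+1}/(p+1)$ for $|w|\le 1/2$. Indeed, on a compact $K\subset\O\backslash Z$ one has $\inf_K d(z)>0$ and $d(z_n)\to 0$, so $|w_n(z)|\le 1/2$ on $K$ for large $n$, and $\sum|\log b_n(z)|\le C(K)\sum_n d(z_n)^k<\infty$ (using $d(z_n)^k\le D^{k-q}d(z_n)^q$, where $D:=\sup_n d(z_n)<\infty$). Hence $f$ is analytic on $\O$ with $Z(f)=Z$ and $f(\infty)=1$.

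For the growth bound, fix $z\in\O$ and split the indices into
$A:=\{n:|w_n(z)|\le 1/2\}$ and $B:=\{n:|w_n(z)|>1/2\}$.
On $A$ the Weierstrass estimate gives $\log|b_n(z)|\le C|w_n|^k$; since $|w_n|\le 1/2$ and $k\ge q$, one has $|w_n|^k\le 2^{q-k}|w_n|^q$, and using $|z-\z_n|\ge d(z)$
$$ \sum_{n\in A}\log|b_n(z)|\le C_1\sum_n\frac{d(z_n)^q}{d(z)^q}\le\frac{C_1 K}{d(z)^q}. $$
On $B$ one has $|z-\z_n|<2d(z_n)$, hence $d(z_n)>d(z)/2$, so $|B|\le 2^q K/d(z)^q$; combining with the elementary bound
$$ \log|b_n(z)|\le \log(1+|w_n|)+\sum_{j=1}^{k-1}\frac{|w_n|^j}{j}\le C_k(1+|w_n|^{k-1}) $$
valid for $|w_n|\ge 1/2$, together with $|w_n|^{k-1}\le 2^{q-k+1}|w_n|^q$ (since $k-1\le q$ and $|w_n|\ge 1/2$), we obtain
$$ \sum_{n\in B}\log|b_n(z)|\le C_k|B|+C_2\sum_{n\in B}\frac{d(z_n)^q}{|z-\z_n|^q}\le\frac{C_3 K}{d(z)^q}. $$
Adding the two cases yields \eqref{394}.

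The main technical point is the treatment of the "near" indices $n\in B$, for which the Weierstrass remainder bound $|w_n|^k$ is useless; one must control both the cardinality of $B$ and the per-factor contribution. The construction succeeds precisely because the choice $k=\lceil q\rceil$ forces $k-1\le q\le k$, which lets one interpolate $|w_n|^k$ by $|w_n|^q$ on $A$ and $|w_n|^{k-1}$ by $|w_n|^q$ on $B$, so both pieces fit inside the target envelope $K/d(z)^q$.
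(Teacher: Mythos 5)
Your construction is essentially the one in the paper: the same Weierstrass product $\prod_n W(u_n(z),p)$ with $u_n(z)=(z_n-e_n)/(z-e_n)$ anchored at a nearest point $e_n\in E$, the same choice of order $p=\lceil q\rceil-1$, and the same split of the index set by the size of $|u_n(z)|$ (the paper uses threshold $1$, you use $1/2$, but this is cosmetic). Your explicit cardinality bound $|B|\lesssim K/d^q(z)$ is a slightly more hands-on way of organizing what the paper does implicitly via $|u_n|^p\le|u_n|^q$ on the ``far'' set, but it is not a genuinely different route.

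One small flaw worth fixing: your intermediate estimate $\log|b_n(z)|\le\log(1+|w_n|)+\sum_{j=1}^{k-1}|w_n|^j/j\le C_k\bigl(1+|w_n|^{k-1}\bigr)$ breaks down when $k=1$ (i.e.\ $q=1$), since then the sum is empty and $\log(1+|w_n|)$ is not $O(1)$ as $|w_n|\to\infty$. The overall conclusion is unaffected — just use $\log(1+|w_n|)\le|w_n|\le 2^{q-1}|w_n|^q$ for $|w_n|\ge 1/2$, $q\ge1$ — but as written the chain of inequalities is incorrect for that case. (Incidentally, the paper's stated bound $\log|W(z,p)|\le A_p|z|^p$ for $|z|\ge 1/3$ suffers from the same defect at $p=0$, and is repaired the same way.)
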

\begin{proof}
We begin with the well known Weierstrass prime factor of order $p=0,1,\ldots$
$$ W(z,p)=(1-z)\exp\left(\sum_{k=1}^p \frac{z^k}{k}\right), \quad p\ge1, \qquad W(z,0)=1-z, $$
and its bounds
\begin{equation}\label{395}
|W(z,p)-1|\le |z|^{p+1}, \qquad |z|\le1,
\end{equation}
\begin{equation}\label{396}
\log|W(z,p)|\le A_p|z|^{p}, \quad |z|\ge\frac13, \qquad A_p=3e(2+\log(p+1)).
\end{equation}

Denote by $e_n\in E$ one of the closest points to $z_n$, i.e., $d(z_n)=|z_n-e_n|$. Put
$$ f(z):=\prod_{n\ge1} W(u_n(z),p), \quad u_n(z)=\frac{z_n-e_n}{z-e_n}\,, $$
$p=0,1,\ldots$ is taken from $q-1\le p<q$, and write
$$ f(z)=\Pi_1(z)\cdot\Pi_2(z), \quad \Pi_j(z)=\prod_{n\in\L_j} W(u_n(z),p), \quad j=1,2, $$
where
$$ \L_1=\L_1(z)=\{n:|u_n(z)|\le1\}, \qquad \L_2=\L_2(z)=\{n:|u_n(z)|>1\}. $$
Since $u_n(z)\to 0$ for each $z\in\O$, the product $\Pi_2$ is finite. By \eqref{395}
$$ \sum_{n\in\L_1} |W(u_n(z),p)-1|\le \sum_{n\in\L_1} |u_n(z)|^{p+1}\le \sum_{n\in\L_1}|u_n(z)|^{q}\le \frac{K}{d^q(z)}\,, $$
so the product $\Pi_1$ converges absolutely and uniformly in $\O$. Besides,
$$ \log|\Pi_1(z)|\le \sum_{n\in\L_1} |W(u_n(z),p)-1|\le \frac{K}{d^q(z)}\,. $$
As for the second product, by \eqref{396}
$$ \log|\Pi_2(z)|\le A_p \sum_{n\in\L_2}|u_n(z)|^p\le A_p \sum_{n\in\L_2}|u_n(z)|^q\le \frac{A_p K}{d^q(z)}\,, $$
which proves \eqref{394}. The equality $Z(f)=Z$ is obvious by the construction.
\end{proof}

\section{Applications in perturbation theory of linear operators}

Recall some rudiments from the spectral theory of linear operators on the Hilbert space, related to the structure of
the spectrum (see, e.g., \cite[Section IV.5.6]{Kato}). A bounded linear operator $T$ on the infinite-dimensional Hilbert
space $\cH$ is said to be a Fredholm operator if its kernel and cokernel are both finite-dimensional subspaces. A complex
number $\l$ lies in the essential spectrum $\s_{ess}(T)$ of operator $T$ if $T-\l$ is not a Fredholm operator. The essential
spectrum is known to be a nonempty closed subset of the spectrum $\s(T)$, and its complement $\cF(T)=\C\backslash\s_{ess}(T)$
is called a Fredholm domain of $T$ (it is not necessarily connected, though). Clearly, the resolvent set
$\rho(T)=\C\backslash\s(T)\subset\cF(T)$.

The set of all isolated eigenvalues of finite algebraic multiplicity is referred to as the discrete spectrum $\s_d(T)=\{\l_j\}$,
each eigenvalue is counted according to its algebraic multiplicity. Although $\s_{ess}(T)\cap\s_d(T)=\emptyset$, the whole spectrum
is not in general exhausted by their union. Indeed, write
$$ \cF(T)=\bigcup_{j\ge0} \cF_j(T), $$
$\cF_j(T)$ are the connected components of $\cF(T)$, $\cF_0$ is the unbounded component (the outer domain). Then either
$\cF_j\subset\s(T)$, or $\cF_j\cap\s(T)\subset\s_d(T)$ (the latter always occurs for $j=0$). So $\cF(T)$ is connected
($\cF(T)=\cF_0(T)$) implies (the union is disjoint)
\begin{equation}\label{40}
\s(T)=\s_{ess}(T)\,\dot{\bigcup}\,\s_d(T).
\end{equation}

The fundamental theorem of Weyl \cite[Theorem IV.5.35]{Kato} is an outstanding result in perturbation theory. Its version for
bounded operators states that the essential spectrum is stable under compact perturbations, that is, for any bounded operator
$A_0$ and compact operator $B$
\begin{equation}\label{41}
\s_{ess}(A)=\s_{ess}(A_0), \qquad A=A_0+B.
\end{equation}
Under certain conditions (see below) relation \eqref{40} holds for the spectrum $\s(A)$ of the perturbed operator as well,
and all accumulation points of $\s_d(A)$ belong to $\s_{ess}(A_0)$. We are aimed here at finding the quantitative rate of
convergence in the form
$$ \sum_{\l\in\s_d(A)} \Phi(d(\l))\le C\|B\|_{\cS_q}^q, \qquad d(\l)={\rm dist}(\l,\s(A_0)), \quad q\ge1, $$
provided $B\in\cS_q$, the Schatten--von Neumann operator ideal.

Our main assumptions on the unperturbed operator $A_0$ are as follows:
\begin{itemize}
  \item[(i)]  $\s_{ess}(A_0)$ does not split the plane;
  \item[(ii)] $\s(A_0)$ is an $r$-convex compact set;
  \item[(iii)] The resolvent $R(\l,A_0)=(A_0-\l)^{-1}$ is subject to the bound
\begin{equation}\label{43}
\|R(\l,A_0)\|\le \Psi(d(\l)), \qquad \l\not\in\s(A_0),
\end{equation}
$\Psi$ is a monotone decreasing from $+\infty$ to $0$ function on $\R_+$.
\end{itemize}

Note that conditions (i) and (ii) are certainly fulfilled whenever $\s(A_0)\subset\R$ or $\s(A_0)\subset\T$ and $\s(A_0)\not=\T$.
As for condition (iii), it is not really a restriction, as one can put
$$ \Psi(x)=\sup\{\|R(\l,A_0)\|: \ \ d(\l)\ge x\}. $$
However such choice of $\Psi$ is very much implicit. There is a variety of operators, for which \eqref{43} holds with
explicit function $\Psi$. Among them, e.g., hyponormal operators \cite[Theorem 3.10.2]{Put} and spectral in the sense
of Dunford operators of finite degree \cite{dsh3} (with $\Psi(x)=x^{-s}$, $s>0$). For normal operators $A_0$ the equality
prevails in \eqref{43} with $\Psi(x)=x^{-1}$. Another typical example is (see \cite{nik76, Gil})
$$ \Psi(x)=\frac{C_1}{x}\,\exp\lp\frac{C_2}{x^2}\rp. $$

\smallskip

A key analytic tool in perturbation theory is the (regularized) perturbation determinant
$$ g_q(\l):=\det{}_{\lceil q\rceil}(I+BR(\l,A_0)), \quad B=A-A_0\in\cS_q, $$
$\lceil q\rceil=\min\{n\in\N:n\ge q\}$, thanks to the following properties (see \cite[Section XI.9]{dsh2},
\cite[Section IV.3]{gk}, \cite{simideal})
\begin{enumerate}
  \item $g_q$ is analytic on $\ovl\C\backslash\s(A_0)$, $g_q(\infty)=1$;
  \item $\l$ is the zero of $g_q$ of multiplicity $k$ if and only if $\l\in\s_d(A)\backslash\s(A_0)$ with algebraic multiplicity $k$;
  \item $\log|g_q(\l)|\le C_q\,\|B\|^q_{\cS_q}\,\|R(\l,A_0)\|^q\,$, $\l\in\ovl\C\backslash\s(A_0)$.
\end{enumerate}

We are in a position to present the main spectral consequences of Theorem \ref{t2} and Corollary~\ref{cor2}.
\begin{theorem}\label{t4}
Given a bounded linear operator $A_0$ subject to conditions $(i)-(iii)$, and $B\in\cS_q$, $q\ge1$, let $\Phi$ be a positive and
absolutely continuous function on $[0,\infty)$ such that $\Phi_1(t)=t^{-1}\Phi(t)$ is monotone increasing at the neighborhood
of the origin, and
\begin{equation}\label{45}
\int_0^{1} \Phi_1'(t)\,\Psi^q\left(\frac{t}5\right)dt+\int_{1}^\infty \Phi'(t)\,\Psi^q\left(\frac{t}3\right)dt<\infty.
\end{equation}
Then
\begin{equation}\label{46}
\sum_{\l\in\s_d(A)} \Phi\lp d(\l)\rp\le C(\s(A_0), \Psi,\Phi,q)\,\|B\|_{\cS_q}^q.
\end{equation}
\end{theorem}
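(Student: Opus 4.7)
\emph{Proof plan.}

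The vehicle is the regularized perturbation determinant. I set $g_q(\l) := \det{}_{\lceil q\rceil}(I + BR(\l, A_0))$; by the properties recalled above, $g_q$ is analytic on $\O := \ovl\C\backslash\s(A_0)$ with $g_q(\infty) = 1$, its zero set in $\O$ (counted with algebraic multiplicity) equals $\s_d(A)\setminus\s(A_0)$, and property (3) together with the resolvent bound (iii) gives
\[
\log|g_q(\l)| \le C_q\,\|B\|^q_{\cS_q}\,\|R(\l, A_0)\|^q \le C_q\,\|B\|^q_{\cS_q}\,\Psi^q(d(\l)).
\]
Thus $v := \log|g_q|$ is subharmonic on $\O$, satisfies $v(\infty) = 0$, and obeys \eqref{31} with $K_v = C_q\,\|B\|^q_{\cS_q}$ and $\psi = \Psi^q$ (positive and monotone decreasing, blowing up at $0$). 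Its Riesz measure $\mu = (1/2\pi)\De v$ is the weighted counting measure supported on $\s_d(A)\setminus\s(A_0)$, with weights equal to the algebraic multiplicities.

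Next I verify the geometric hypotheses of Theorem \ref{t2} with $E = \s(A_0)$. The $r$-convexity is exactly (ii). For the connectedness of $\O$, condition (i) says that the Fredholm domain $\cF(A_0) = \ovl\C\backslash\s_{ess}(A_0)$ is connected; hence by \eqref{40} one has $\s(A_0) = \s_{ess}(A_0)\cup\s_d(A_0)$ (disjointly), with $\s_d(A_0)$ a discrete subset of $\cF(A_0)$. Removing a discrete set from a connected two-dimensional open subset of $\ovl\C$ leaves it connected, so $\O = \cF(A_0)\setminus\s_d(A_0)$ is a domain in $\ovl\C$.

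With the choice $\p = \Phi$, the hypothesis \eqref{311} reads verbatim as \eqref{45}, and Theorem \ref{t2} yields
\[
\sum_{\l \in \s_d(A)\setminus\s(A_0)} \Phi(d(\l)) \le C(\s(A_0),\Psi,\Phi,q)\,\|B\|^q_{\cS_q}.
\]
Any remaining $\l \in \s_d(A)\cap\s(A_0)$ has $d(\l) = 0$, and the hypothesis that $\Phi_1(t) = \Phi(t)/t$ is monotone increasing near the origin, together with the positivity and absolute continuity of $\Phi$, forces $\Phi(0) = 0$, so these points contribute trivially to the left-hand side of \eqref{46}. This delivers \eqref{46}. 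The argument is essentially a translation of the spectral data into the subharmonic framework, and the only points that require a moment of thought are the passage from (i) to the connectedness of $\O$ and the harmless contribution from $\s_d(A)\cap\s(A_0)$; no substantive analytic obstacle arises beyond invoking Theorem \ref{t2}.
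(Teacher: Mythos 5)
Your proof is correct and follows essentially the same route as the paper: apply Theorem~\ref{t2} with $E = \s(A_0)$, $v = \log|g_q|$, $\psi = \Psi^q$, $\p = \Phi$, then account for any points of $\s_d(A)\cap\s(A_0)$ via $\Phi(0)=0$. You merely spell out the connectedness of $\O$ and the deduction of $\Phi(0)=0$ in a bit more detail than the paper does; otherwise the arguments coincide.
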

\begin{proof} Since $\s_{ess}(A_0)=\s_{ess}(A)$ does not split the plane, we see that
\begin{equation}\label{461}
 \s(A_0)=\s_{ess}(A_0)\,\dot{\bigcup}\,\s_d(A_0), \quad \s(A)=\s_{ess}(A)\,\dot{\bigcup}\,\s_d(A),
\end{equation}
and so both $\s(A_0)$ and $\s(A)$ do not split the plane.

We apply Theorem \ref{t2} with $E=\s(A_0)$ to the subharmonic function
$$ v(z)=\log|g_q(z)|, \qquad z\in \rho(A_0). $$
In view of property (3) of perturbation determinants, and condition (iii), inequality \eqref{31} holds with  $K_v=C_q\|B\|_{\cS_q}^q$
and $\psi=\Psi^q$. The Riesz measure is now a discrete and integer-valued measure supported on $Z(g_q)$, and $\mu\{\l\}$ equals the
multiplicity of the zero of $g_q$ at $\l$ (the algebraic multiplicity of the eigenvalue $\l(A)$). The only problem is that in \eqref{461}
$\s_d(A_0)$ is, generally speaking, nonempty, and, what is more to the point, the set $\s_d(A_0)\cap\s_d(A)$ can be nonempty as well,
and this part of $\s_d(A)$ is not controlled by the zero set of the perturbation determinant. \footnote{As a matter of fact,
the Weinstein--Aronszajn formula says that the order of zero (pole) of $g_p$ at the point $\l\in\s_1(A)$ equals
$\nu(\l(A))-\nu(\l(A_0))\in\Z$, the difference of algebraic multiplicities of the eigenvalue $\l$.} Anyway, since $\Phi(0)=0$,
Theorem \ref{t2} leads to
$$ \sum_{\l\in\s_d(A)} \Phi\lp d(\l)\rp= \sum_{\l\in\s_d(A)\backslash\s_d(A_0)} \Phi\lp d(\l)\rp\le
   C(\s(A_0), \Psi,\Phi,q)\,\|B\|_{\cS_q}^q,  $$
as claimed.
\end{proof}

{\bf Remark}. A question arises naturally, whether condition (i) can be relaxed to
\begin{itemize}
  \item[(i')] $\s(A_0)$ does not split the plane.
\end{itemize}
The answer is negative. Indeed, there are examples of operators $A_0$, $A$ so that
$$ \s_{ess}(A_0)=\partial\D, \quad \s(A_0)=\ovl\D, \qquad \s(A)=\partial\D\,\dot{\bigcup}\,\s_d(A), $$
and the portion of $\s_d(A)$ inside $\D$ is out of reach.

If $\s_{ess}(A_0)$ splits the plane, then (see Remark 1 after Theorem \ref{t2}) we can argue as above with the resolvent set $\rho(A_0)$
replaced by the outer domain $\cF_0(A_0)$, and end up with the inequality
$$ \sum_{\l\in\s_d(A)\cap\cF_0(A_0)} \Phi\lp d(\l)\rp\le
   C(\s(A_0), \Psi,\Phi,q)\,\|B\|_{\cS_q}^q. $$

\begin{corollary}\label{cor3}
In the assumptions of Theorem $\ref{t4}$
$$ \int_0 \Phi_1'(t)\,\Psi^q\left(\frac{t}5\right)dt<\infty \Rightarrow
\sum_{\l\in\s_d(A)} \Phi\lp d(\l)\rp\le C(\s(A_0),\Psi,\Phi,q,\|B\|)\,\|B\|^q_{\cS_q}. $$
\end{corollary}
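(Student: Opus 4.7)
The plan is to imitate the proof of Theorem~\ref{t4} line by line, but to invoke Corollary~\ref{cor2} in place of Theorem~\ref{t2} at the crucial step. The only extra ingredient required is an a priori bound on the support of the Riesz measure of $\log|g_q|$, and this comes essentially for free from the boundedness of $A$. So I would again put $v(z) = \log|g_q(z)|$ on $\O = \ovl\C\setminus\s(A_0)$; by properties (1)--(3) of the regularized perturbation determinant combined with hypothesis (iii), $v$ is subharmonic, satisfies $v(\infty)=0$, and obeys \eqref{31} with $K_v = C_q\,\|B\|_{\cS_q}^q$ and $\psi = \Psi^q$. Its Riesz measure $\mu = (2\pi)^{-1}\De v$ is the integer-valued counting measure of $Z(g_q)\subset \s_d(A)\setminus\s(A_0)$, weighted by algebraic multiplicities.

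The new observation is that $\supp\mu$ is automatically bounded. Since $A = A_0 + B$ has operator norm at most $\|A_0\| + \|B\|$, its spectrum, and in particular $\s_d(A)$, lies in the closed disk of radius $R_\mu := \|A_0\| + \|B\|$. Hence Corollary~\ref{cor2} is applicable with this explicit $R_\mu$, and its sole integrability hypothesis $\int_0 \p_1'(t)\,\psi(t/5)\,dt <\infty$ — read here with $\p = \Phi$ and $\psi = \Psi^q$ — coincides exactly with the standing assumption of the present Corollary. Note that the tail integral in \eqref{311} drops out precisely because of the bounded-support reduction, which is why the second integral in \eqref{45} from Theorem~\ref{t4} is no longer needed.

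Applying Corollary~\ref{cor2} then yields
$$ \int_\O \Phi(d(\z))\,\mu(d\z) \le C(\s(A_0),\Psi,\Phi,q,R_\mu)\,\|B\|_{\cS_q}^q, $$
and the $R_\mu$-dependence of the constant translates into the advertised dependence on $\|B\|$ (the $\|A_0\|$ part being already absorbed by the dependence on $\s(A_0)$). The remaining bookkeeping — that the eigenvalues in $\s_d(A)\cap\s_d(A_0)$ are invisible to $\mu$ but contribute $\Phi(0)=0$ to the left-hand side of \eqref{46} (this vanishing being forced by $\Phi_1(0+)=0$, which in turn follows from the monotonicity of $\Phi_1$ near the origin together with the near-zero integrability condition and $\psi\to+\infty$) — is word-for-word the argument already given in the proof of Theorem~\ref{t4}, and it allows us to replace $\mu$ by the full counting measure of $\s_d(A)$. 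There is no genuinely difficult step; the whole point is to recognize that boundedness of $A$ confines $\supp\mu$ to a disk of radius $\|A_0\|+\|B\|$, which renders the behaviour of $\Phi$ and $\Psi$ at infinity immaterial at the cost of letting the constant depend on $\|B\|$.
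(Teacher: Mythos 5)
Your overall strategy---reducing to Corollary~\ref{cor2} by noting that the Riesz measure of $\log|g_q|$ has bounded support---is exactly what the paper does; the gap lies in how you control $R_\mu$. Your choice $R_\mu = \|A_0\| + \|B\|$ makes the constant depend on $\|A_0\|$, and your parenthetical claim that this dependence is ``already absorbed by the dependence on $\s(A_0)$'' is unjustified as stated: the spectrum $\s(A_0)$ alone does not determine $\|A_0\|$ for non-normal $A_0$ (a large nilpotent component inflates the operator norm while leaving the spectrum unchanged). The paper avoids this by invoking Gil's Lemma~8.4.2, which shows that condition (iii) forces $\max_{\z\in\s(A)} d(\z)\le x(\Psi,\|B\|^{-1})$, where $x(\Psi,a)$ is the largest root of $\Psi(x)=a$; one then takes $R_\mu = \sup_{\l\in\s(A_0)}|\l| + x(\Psi,\|B\|^{-1})$, which depends only on $\s(A_0)$, $\Psi$, and $\|B\|$, matching the declared constant.

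Your route could in principle be repaired: feeding condition (iii) into the Riesz integral
$A_0 = \tfrac{1}{2\pi i}\oint_{\G}\l\,(\l-A_0)^{-1}\,d\l$ over a circle $\G$ of radius $1+\sup_{\z\in\s(A_0)}|\z|$ yields $\|A_0\|\le\bigl(1+\sup_{\z\in\s(A_0)}|\z|\bigr)^2\Psi(1)$, so $\|A_0\|$ is in fact controlled by $\s(A_0)$ and $\Psi$ \emph{jointly}. But this is an extra argument you neither supplied nor signalled, and the absorption cannot be credited to $\s(A_0)$ alone; either add this step, or replace the crude $R_\mu=\|A_0\|+\|B\|$ by the Gil-type bound as the paper does.
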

\begin{proof}
In our setting the support of the Riesz measure is bounded, ${\rm supp}\,\mu\subset B(0,R_\mu)$, so Corollary \ref{cor2} applies.
It remains only to show that the value $R_\mu$ is controlled by the operator norm $\|B\|$. Indeed,
it is proved in \cite[Lemma 8.4.2]{Gil} that under condition \eqref{43}
$$ \max_{\z\in\s(A)} d(\z)\le x(\Psi, \|B\|^{-1}), $$
where $x(\Psi,a)$, $a>0$, is the largest solution of the equation $\Psi(x)=a$. So one can take
$$ R_\mu= \sup_{\l\in\s(A_0)}|\l|+x(\Psi,\|B\|^{-1}), $$ as needed.
\end{proof}

\smallskip

{\bf Example 1}. Let $A_0$ be a bounded linear operator with a real spectrum, $\s(A_0)\subset\R$, and condition \eqref{43} hold with
$\Psi(x)=x^{-p}$, $p>0$. Now both $\s_{ess}(A_0)$ and $\s(A_0)$ are compact subsets of the real line, so they are $r$-convex and do
not split the plane. So for $A=A_0+B$, $B\in\cS_q$, and each $\e>0$, the bounds
\begin{equation}\label{47}
\sum_{\l\in\s_d(A)} \Phi\lp d(\l)\rp\le
   C(\s(A_0),p,q,\e)\,\|B\|_{\cS_q}^q, \quad \Phi(x)=\left\{
  \begin{array}{ll}
    x^{pq+1+\e}, & \hbox{$x\le1;$} \\
    x^{pq-\e}, & \hbox{$x>1$,}
  \end{array}
\right.
\end{equation}
and
\begin{equation}\label{48}
\sum_{\l\in\s_d(A)} d^{pq+1+\e}(\l)<\infty
\end{equation}
hold. In particular, if $W$ is a bounded linear operator with imaginary component from $\cS_q$, relations \eqref{47} and \eqref{48}
are true with $p=1$,
$$ A_0=W_R=\frac{W+W^*}2, \qquad B=W_I=\frac{W-W^*}{2i}\,. $$

The stronger result for self-adjoint $A_0$ is in \cite{han12}. Its direct application to operators $A_0$ similar to self-adjoint
($A_0=T^{-1}A_1T$, $A_1=A_1^*$) would lead to the constant $C$ which appears on the right hand side and depends on the transform $T$.
In \eqref{47} this constant depends only on the {\it spectrum} of $A_0$.

{\bf Example 2}. The same argument works for unitary (or similar to unitary) operators $A_0$ such that there is $\z\in\T\cap\rho(A_0)$.
In particular, let $V$ be an $\cS_q$-quasiunitary operator, that is, $I-V^*V\in\cS_q$, and $\z\in\T\cap\rho(V)$. Then its Cayley
transform $W=i(\z+V)(\z-V)^{-1}$ satisfies
$$ W_I=(\bar\z-V^*)^{-1}\{I-V^*V\}(\z-V)^{-1}\in\cS_q. $$
Note that $W+i=2i\z(\z-A)^{-1}$ is invertible, and $V=\z(W+i)^{-1}(W-i)$. It is easy to see that
$$ V=U+B, \qquad B\in\cS_q, \quad U=\z(W_R+i)^{-1}(W_R-i) $$
is a unitary operator with $\s(U)\not=\T$, so the bound similar to \eqref{48} holds with $A_0=U$, $A=V$.

{\bf Example 3}. In the Hilbert space $L^2[0,1]$ consider an operator
$$ [Af](x)=a_0(x)f(x)+\int_0^1 K(x,y)f(y)\,dy $$
with the Hilbert--Schmidt kernel $K$, i.e., $K\in L^2([0,1]\times [0,1])$. We assume that the function $a_0$ is complex valued,
continuous on $[0,1]$, and the arc $\g=\{a_0(x): \ 0\le x\le1\}$ is Jordan and either a BC-arc (see Definition \ref{bc}) or has finite
global curvature (in particular, $C^2$-smooth). The multiplication operator $A_0f=a_0f$ is normal, and its spectrum $\s(A_0)=\g$
is the $r$-convex compact set with connected complement (see Propositions \ref{mcur} and \ref{bcr}). As in \eqref{47} we have
\begin{equation}\label{50}
\sum_{\l\in\s_d(A)} \Phi\lp d(\l)\rp\le
   C(\g,\e)\,\|K\|_{\cS_2}^2, \quad \Phi(x)=\left\{
  \begin{array}{ll}
    x^{3+\e}, & \hbox{$x\le1;$} \\
    x^{2-\e}, & \hbox{$x>1$.}
  \end{array}
\right.
\end{equation}

\noindent
{\bf Acknowledgement}. We thanks A. Eremenko for helpful remarks concerning $r$-convexity.

\bibliographystyle{amsplain}

\end{document}